\Crefname{figure}{Figure}{Figures}
\renewcommand{\geq}{\geqslant}
\renewcommand{\leq}{\leqslant}
\renewcommand{\le}{\leq}
\renewcommand{\ge}{\geq}
\newcommand{\arxiv}[1]{\href{http://arxiv.org/abs/#1}{arXiv:#1}}
\DeclareMathAlphabet{\mathbfcal}{OMS}{cmsy}{b}{n}
\DeclareMathAlphabet{\mathbbold}{U}{bbold}{m}{n}
\newcommand{\E}{\mathbb{E}}
\newcommand{\Z}{\mathbb{Z}}
\newcommand{\N}{\mathbb{N}} 
\newcommand{\R}{\mathbb{R}}
\newcommand{\trop}[1][]{\ifthenelse{\equal{#1}{}}{ \mathbb{T} }{ \mathbb{T}(#1) }}
\newcommand{\card}[1]{|{#1}|}
\newtheorem{theorem}{Theorem}[section]
\newtheorem{proposition}[theorem]{Proposition}
\newtheorem{corollary}[theorem]{Corollary}
\newtheorem{lemma}[theorem]{Lemma}
\theoremstyle{definition}
\newtheorem{definition}[theorem]{Definition}
\theoremstyle{remark}
\newtheorem{remark}[theorem]{Remark}
\newtheorem{example}[theorem]{Example}
\tikzset{grid/.style={gray!30,very thin}}
\tikzset{axis/.style={gray!50,->,>=stealth'}}
\tikzset{convex/.style={draw=none,fill=lightgray,fill opacity=0.7}}
\tikzset{convexborder/.style={very thick}}
\tikzset{point/.style={blue!50}}
\tikzset{hs/.style={fill opacity=0.3,fill=orange,draw=none}}
\tikzset{hsborder/.style={orange,ultra thick,dashdotted}}
\newcommand{\overbar}[1]{\mkern 1.5mu\overline{\mkern-1.5mu#1\mkern-1.5mu}\mkern 1.5mu}
\newcommand{\dgraph}{\vec{\mathcal{G}}}
\newcommand{\vertices}{V}
\newcommand{\vertex}{v}
\newcommand{\edges}{E}
\newcommand{\edge}{e}
\newcommand{\payoff}{r}
\newcommand{\states}{V}
\newcommand{\state}{v}
\newcommand{\stateII}{w}
\newcommand{\stateIII}{w'}
\newcommand{\sea}{W}
\newcommand{\sources}{S}
\newcommand{\rvar}{X}
\newcommand{\rvarII}{\tau}
\newcommand{\transition}{P}
\newcommand{\visits}[3]{{\zeta_{#2#3}^{#1}}}
\newcommand{\ind}{\mathbf{1}}
\newcommand{\weight}{p}
\newcommand{\Prob}{\mathbb{P}}
\newcommand{\Id}{I}
\newcommand{\dpath}[3][]{(#1\ifthenelse{\equal{#1}{}}{}{,} #2 \rightarrow #3   )}
\newcommand{\ndpath}[3][]{(#1\ifthenelse{\equal{#1}{}}{}{,} #2 \nrightarrow #3   )}
\newcommand{\reclass}{C}
\newcommand{\recstates}{S}
\newcommand{\absor}{\psi}
\newcommand{\subedges}{\edges'}
\newcommand{\roots}{R}
\newcommand{\forests}{\mathcal{F}}
\newcommand{\stdist}{\pi}
\newcommand{\comden}{M}
\newcommand{\nat}{m}
\newcommand{\bias}{u}
\newcommand{\gameval}{\chi}
\newcommand{\linf}[1]{\|{#1}\|_{\infty}}
\newcommand{\mchain}{\mathcal{X}}
\newcommand{\denprod}{D}
\title[Bit-sizes of stationary distributions in finite Markov chains]{Optimal bounds for bit-sizes of stationary distributions in finite Markov chains}
\author[Mateusz Skomra]{Mateusz Skomra}
\address{LAAS-CNRS, Universit{\'e} de Toulouse, CNRS, Toulouse, France}
\email{mateusz.skomra@laas.fr}
\begin{document}

\begin{abstract}
An irreducible stochastic matrix with rational entries has a stationary distribution given by a vector of rational numbers. We give an upper bound on the lowest common denominator of the entries of this vector. Bounds of this kind are used to study the complexity of algorithms for solving stochastic mean payoff games. They are usually derived using the Hadamard inequality, but this leads to suboptimal results. We replace the Hadamard inequality with the Markov chain tree formula in order to obtain optimal bounds. We also adapt our approach to obtain bounds on the absorption probabilities of finite Markov chains and on the gains and bias vectors of Markov chains with rewards.
\end{abstract}

\maketitle

\section{Introduction}

In this note, we study the following problem. Suppose that $\transition \in [0,1]^{n \times n}$ is an irreducible stochastic matrix whose entries are rational numbers with a common denominator $\comden \in \N$. Then, the stationary distribution $\stdist \in \interval[open left]{0}{1}^{n}$ of $\transition$ is a vector with rational entries. Our aim is to obtain an optimal upper bound on the lowest common denominator of the numbers $(\stdist_{i})_{i = 1}^{n}$, which bounds the number of bits needed to encode $\stdist$.

\subsection{Context and motivation}
Our main motivation to study the problem stated above comes from the area of stochastic mean payoff games, which form a generalization of finite Markov decision processes. A stochastic mean payoff game is a zero-sum game played by two players (Min and Max) who move a token along the edges of a finite directed graph $\dgraph = (\states, \edges)$. Some vertices of the graph are controlled by player Min, some are controlled by player Max, and some are controlled by nature, which moves the token according to some fixed probability distribution. Furthermore, each vertex $\state$ of the graph is equipped with an integer payoff $\payoff_{\state} \in \Z$. The players are supposed to play according to positional strategies, i.e., their decisions depend only on the current position of the token. In particular, if the token lands twice on the same vertex controlled by one of the two players, then this player makes the same decision on both occasions. As a consequence, once the strategies of the players are fixed, the movement of the token is described by a Markov chain $(X_0, X_1, \dots)$ on $\states$, where $X_0$ is the starting position of the token, and the randomness of this process comes only from the random decisions made by the nature. The payoff of player Max is given by the average reward criterion
\[
\lim_{N \to \infty} \frac{1}{N}\E(\payoff_{X_0} + \dots + \payoff_{X_N}) \, .
\]
Player Max aims to maximize this quantity, while player Min wants to minimize it. It is known that stochastic mean payoff games have always have optimal strategies \cite{liggett_lippman}. In other words, there exists a vector $\gameval \in \R^{\states}$, known as the \emph{value} of the game, such that player Max has a strategy that guarantees that the payoff is not smaller than $\gameval_\state$ for all initial states $\state$. Likewise, player Min has a strategy that guarantees that the payoff is not greater than $\gameval_\state$. In particular, if both player play optimally, then the final payoff is equal to $\gameval_\state$. Stochastic mean payoff games attracted a significant interest in the computer science literature thanks to their uncertain complexity status. Even though optimal strategies exist, finding them algorithmically is a nontrivial task. In particular, it is not known if these strategies can be found in polynomial time and this problem has been open for 30 years, even in some restrictive cases (deterministic mean payoff games and parity games) \cite{condon,gurvich,emerson_jutla}. We refer the reader to \cite{andersson_miltersen,zwick_paterson,comin_rizzi,ibsen-jensen_miltersen,boros_gurvich_makino,halman,calude_parity_games,parity_lower_bounds,filar_vrieze} for more information about mean payoff games and the related algorithmic issues. We also note that the one-player variant of these games is equivalent to Markov decision processes with average reward criterion, studied for instance in \cite{puterman}.

Numerous algorithms for solving stochastic mean payoff games that are proposed in the literature, such as the value iteration algorithms or the pumping algorithm, approximate the value $\gameval$ without knowing the optimal strategies of the game. When the value is approximated to a sufficient precision, a rounding procedure is used to find $\gameval$ exactly. We refer to \cite{condon,ibsen-jensen_miltersen,auger_strozecki,boros_gurvich_makino} for examples of such algorithms. In order to use a rounding procedure, one needs to have a bound on the precision needed to recover $\gameval$. This is done by bounding the denominators of $\gameval$. Such a bound can be obtained using the Hadamard inequality, and this approach is used in \cite{condon,auger_strozecki,generic_uniqueness,boros_gurvich_makino}, but it leads to suboptimal results in many cases of interest. In this note, we propose to use a more combinatorial approach, based on the Markov chain tree formula~\cite[Lemma~3.2]{catoni_rare_transitions}, to obtain optimized bounds. As noted above, $\gameval$ is the payoff of player Max obtained when both players play optimally. Let $(X_0, X_1, \dots)$ be the Markov chain obtained under the optimal strategies and let $\transition$ be its transition matrix. Our basic case of interest arises when $\transition$ is irreducible. Then, \cite[Appendix~A.4]{puterman} shows that $\gameval$ does not depend on the initial state, $\gameval = \eta(1,1,\dots,1)$, and $\eta = \payoff^{T}\stdist$, where $\stdist$ is the stationary distribution of $\transition$. Thus, the denominator of $\eta$ is not greater than the lowest common denominator of $(\stdist_{\state})_{\state \in \states}$, which leads to the problem stated in the first paragraph of this note.

\subsection{Main results}

Throughout this note, we use the following notation. We denote $[m] = \{1,\dots,m\}$ for $m \in \N$. Furthermore, let $\transition \in [0,1]^n$ be a stochastic matrix with rational entries and let $\mchain \coloneqq (X_0, X_1, \dots)$ be a Markov chain on the state space $\states \coloneqq [n]$ with transition matrix $\transition$. In general, we do not suppose that $\transition$ is irreducible, since most of our results do not require this assumption. For every $i \in [n]$, let $\comden_i$ be the lowest common denominator of the entries in the $i$th row of $\transition$, and let $\comden$ be the lowest common denominator of all the entries of $\transition$. We also put
\[
\denprod \coloneqq \comden_1\comden_2\dots\comden_n \, .
\]
Moreover, we denote by $\reclass_1, \dots, \reclass_p \subset [n]$ the recurrent classes of $\mchain$ and, for all $\ell \in [p]$, we denote by $\stdist^{(\ell)} \in \interval[open left]{0}{1}^{\reclass_{\ell}}$ the stationary distribution on $\reclass_{\ell}$. Furthermore, let $\payoff \in \Z^n$ be a vector of integer numbers and let $\gameval \in \R^n$ be the \emph{gain} vector defined as
\[
\forall i, \, \gameval_i \coloneqq  \lim_{N \to \infty} \frac{1}{N}\E(\payoff_{X_0} + \dots + \payoff_{X_N} \mid X_0 = i) \, .
\]
We note that $\gameval$ is well defined and given by
\begin{equation}\label{eq:value}
\forall i, \, \gameval_i = \sum_{\ell = 1}^{p} \absor(i, \reclass_\ell)\eta^{(\ell)} \, ,
\end{equation}
where $\absor(i, \reclass_\ell)$ denotes the probability that the Markov chain starting at $i$ reaches $\reclass_\ell$, and $\eta^{(\ell)} \coloneqq \sum_{j \in \reclass_\ell}\payoff_{j}\stdist^{(\ell)}_j$ for all $\ell \in [p]$. The formula \cref{eq:value} follows from the ergodic theorem of finite Markov chains, see \cite[Appendix~A.4]{puterman} and \cite[Part I, \S 6--\S 9]{chung_markov_chains} for detailed information. Our main result is the following theorem for irreducible matrices and its corollary, which holds even if $\transition$ is not irreducible.

\begin{theorem}\label{thm:stationary}
Suppose that $\transition$ is irreducible and let $\stdist \in \interval[open left]{0}{1}^{n}$ denote its stationary distribution. Then, $\stdist$ is a vector of rational numbers whose lowest common denominator is not greater than $\min\{n\denprod, n\comden^{n-1}\}$.
\end{theorem}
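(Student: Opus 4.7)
The plan is to invoke the Markov chain tree formula cited in the introduction as \cite[Lemma~3.2]{catoni_rare_transitions}. Letting $\mathcal{T}_i$ denote the set of spanning arborescences of the complete digraph on $[n]$ rooted at $i$ (so that every non-root vertex has a unique outgoing edge directed toward $i$ along the tree), this formula gives
\[
\stdist_i \;=\; \frac{\tau_i}{\sum_{i' \in [n]} \tau_{i'}}, \qquad \tau_i \;\coloneqq\; \sum_{T \in \mathcal{T}_i} \prod_{(j,k) \in T} \transition_{jk}.
\]
The rest of the proof will reduce to a careful accounting of denominators and of the sizes of the rationals $\tau_i$.

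For the denominator analysis, I would observe that in any $T \in \mathcal{T}_i$ the product $\prod_{(j,k) \in T} \transition_{jk}$ contains exactly one factor from each row $j \neq i$, so its denominator divides both $\prod_{j \neq i} \comden_j = \denprod/\comden_i$ and $\comden^{n-1}$. Consequently,
\[
c_i \;\coloneqq\; (\denprod/\comden_i)\,\tau_i \in \Z_{\geq 0} \qquad \text{and} \qquad c'_i \;\coloneqq\; \comden^{n-1}\,\tau_i \in \Z_{\geq 0}
\]
are both well-defined non-negative integers, and
\[
\stdist_i \;=\; \frac{c_i \comden_i}{\sum_{i'} c_{i'} \comden_{i'}} \;=\; \frac{c'_i}{\sum_{i'} c'_{i'}}.
\]
If I can show $c_i \leq \denprod/\comden_i$ and $c'_i \leq \comden^{n-1}$, then the two denominators are bounded respectively by $\sum_{i'} \comden_{i'}\cdot(\denprod/\comden_{i'}) = n\denprod$ and $n\comden^{n-1}$, and the theorem follows since the lowest common denominator of $\stdist$ divides any common denominator.

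The crucial inequality is therefore $\tau_i \leq 1$, which I would establish by enlarging the sum over spanning trees to the sum over all functions $[n]\setminus\{i\} \to [n]$ (every such function is a candidate assignment of an outgoing edge to each non-root) and then factorizing row by row:
\[
\tau_i \;\leq\; \sum_{f \colon [n]\setminus\{i\} \to [n]} \prod_{j \neq i} \transition_{j,f(j)} \;=\; \prod_{j \neq i}\Bigl(\sum_{k} \transition_{jk}\Bigr) \;=\; 1.
\]

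I expect the argument to be short and conceptually clean once the Markov chain tree formula is in hand. The main obstacle, and also the reason the bound improves on Hadamard-type estimates, is extracting the full strength of the stochastic condition through this row-wise factorization: converting $n-1$ sums-equal-to-one into a product-equal-to-one preserves the refined dependence on the individual row denominators $\comden_i$, whereas a single determinantal bound would not.
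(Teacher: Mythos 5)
Your proposal is correct and follows essentially the same route as the paper: the Markov chain tree formula, the observation that each arborescence uses exactly one entry from each non-root row (giving the $\denprod$ and $\comden^{n-1}$ clearings), and the bound $\tau_i \le 1$ obtained by relaxing the sum over trees to a product of row sums. Your per-root bookkeeping with $\denprod/\comden_i$ is a slightly finer phrasing of the same accounting and collapses to the identical bound $n\denprod$.
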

\begin{corollary}\label{cor:value_ergodic}
Suppose that $\gameval = \eta(1,1,\dots,1)$ for some $\eta \in \R$. Then, $\eta$ is a rational number with denominator not greater than $\min\{n\denprod, n\comden^{n-1}\}$.
\end{corollary}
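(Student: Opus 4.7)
The plan is to reduce the corollary to \cref{thm:stationary} by passing to the restriction of the Markov chain to a single recurrent class.

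First I would combine the hypothesis $\gameval = \eta(1,1,\dots,1)$ with \cref{eq:value} to conclude that $\eta = \eta^{(\ell)}$ for every $\ell \in [p]$. Indeed, fixing any $i \in \reclass_\ell$, the chain started at $i$ stays inside $\reclass_\ell$, so $\absor(i,\reclass_\ell) = 1$ and $\absor(i,\reclass_{\ell'}) = 0$ for $\ell' \ne \ell$, and \cref{eq:value} collapses to $\gameval_i = \eta^{(\ell)}$. It therefore suffices to bound the denominator of $\eta^{(\ell)}$ for any single $\ell$.

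Next I would fix such an $\ell$ and consider the submatrix $\transition'$ of $\transition$ indexed by $\reclass_\ell$. Since recurrent classes are closed, the rows of $\transition'$ still sum to one, so $\transition'$ is an irreducible stochastic matrix with stationary distribution $\stdist^{(\ell)}$. Let $n_\ell$, $\comden'$, $\denprod'$ denote the analogues of $n$, $\comden$, $\denprod$ for $\transition'$. Then $n_\ell \le n$, $\comden'$ divides $\comden$ (the entries of $\transition'$ are entries of $\transition$), and, since row $i$ of $\transition'$ has the same nonzero entries as row $i$ of $\transition$, we have $\denprod' = \prod_{i \in \reclass_\ell}\comden_i \le \denprod$ because $\comden_i \ge 1$. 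Consequently $\min\{n_\ell \denprod', n_\ell (\comden')^{n_\ell-1}\} \le \min\{n\denprod, n\comden^{n-1}\}$.

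Applying \cref{thm:stationary} to $\transition'$ then bounds the lowest common denominator of $\stdist^{(\ell)}$ by $\min\{n\denprod, n\comden^{n-1}\}$. Since $\eta = \eta^{(\ell)} = \sum_{j\in\reclass_\ell}\payoff_j \stdist^{(\ell)}_j$ is an integer combination of the coordinates of $\stdist^{(\ell)}$, its denominator inherits the same bound, finishing the argument. No serious obstacle should arise: the entire content of the corollary is already packaged in \cref{thm:stationary}, and the only substantive step is the collapse of \cref{eq:value} to $\eta = \eta^{(\ell)}$, which follows directly from the hypothesis that $\gameval$ is constant.
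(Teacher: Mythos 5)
Your proposal is correct and follows essentially the same route as the paper: observe via \cref{eq:value} that $\eta = \eta^{(\ell)} = \sum_{j \in \reclass_\ell}\payoff_j\stdist^{(\ell)}_j$ for a recurrent class $\reclass_\ell$, and then apply \cref{thm:stationary} to the restricted chain on $\reclass_\ell$. You merely spell out in more detail the (correct) verification that the parameters $n_\ell$, $\comden'$, $\denprod'$ of the subchain are dominated by those of the full chain, which the paper leaves implicit.
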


Before discussing these results, observe that $\comden \le \denprod \le \comden^k$, where $k$ is the number of rows of $\transition$ that have at least two nonzero entries, $k \coloneqq \{i \in [n] \colon \exists j, \, 0 < \transition_{ij} < 1\}$. In particular, we have the inequality 
\begin{equation}\label{eq:deter_states}
\min\{n\denprod, n\comden^{n-1}\} \le n\comden^{\min\{k, n-1\}} \, .
\end{equation}

As noted above, our proof of \cref{thm:stationary} relies on a combinatorial formula for stationary distributions, known as the Markov chain tree formula \cite[Lemma~3.2]{catoni_rare_transitions}. By comparison, \cite[Lemma~4.10]{generic_uniqueness} uses the Hadamard inequality to obtain a bound $n^{n/2}\comden^{n}$ for the same problem. A more precise application of the Hadamard inequality is used in \cite[Lemma~6]{boros_gurvich_makino} to obtain a bound of the form $kn(2\comden)^{k+1}$. The inequality \cref{eq:deter_states} shows that our estimate is better than both of these bounds. Even more, in \cref{pr:st_optimality} we show that our bound is essentially optimal, in the sense that it cannot be improved even by a multiplicative constant. The proof of \cref{pr:st_optimality} also shows that this bound remains optimal even if we only want to bound the denominators of $\stdist_i$ separately. We also note that the interest of having bounds that depend on $k$ is that these types of bounds may be used for stochastic mean payoff games with bounded number of states controlled by nature, see \cite{gimbert_horn,ibsen-jensen_miltersen,auger_strozecki,boros_gurvich_makino} for more discussion. Furthermore, we point out that \cref{eq:value} shows that the assumption of \cref{cor:value_ergodic} is satisfied if $\mchain$ is irreducible or has only one recurrent class, but it may also be satisfied even if $\mchain$ has multiple recurrent classes. We refer to \cite{ergodicity_conditions,boros_gurvich_makino} for conditions that ensure that a stochastic mean payoff game has a value that does not depend on the initial state.

Our next result is especially useful in the situation in which $\mchain$ has an absorbing state, $\transition_{jj} = 1$, and we put $\payoff_j \coloneqq 1$ and $\payoff_i \coloneqq 0$ for all $i \neq j$. In this case, \cref{eq:value} shows that the gain $\gameval_i$ is equal to the probability that the Markov chain starting at $i$ reaches $j$. This situation arises in simple stochastic games~\cite{condon} and some of its generalizations~\cite{gimbert_horn,auger_strozecki,auger_montjoye_strozecki}. In particular, in order to bound the denominator of $\gameval_i$, we want to bound the denominators of absorption probabilities (which are rational numbers). To do so, we use an adaptation of the Markov chain tree formula to absorption probabilities, following the approach presented in~\cite{catoni_rare_transitions}. In order to state out estimate, let $T \subset [n]$ denote the set of transient states of the Markov chain $\mchain$ and put $\denprod_T \coloneqq \prod_{i \in T}\comden_i$. Our approach gives the following result.

\begin{theorem}\label{thm:absorption}
The numbers $\bigl(\absor(i, \reclass_{\ell})\bigr)_{i \in [n], \ell \in [p]}$ are rational and their lowest 
common denominator is not greater that $\min\{\denprod_T, \comden^{n-2}\}$.
\end{theorem}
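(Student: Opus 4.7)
The plan is to mirror the proof of \cref{thm:stationary} using a variant of the Markov chain tree formula adapted to absorption probabilities. Let $T \subset [n]$ be the set of transient states and let $\forests$ denote the set of spanning in-forests on $[n]$ rooted at the set $\bigcup_\ell \reclass_\ell$ of recurrent states: subgraphs in which each $j \in T$ has exactly one outgoing edge, each recurrent state has none, and the resulting graph has no cycles. For $F \in \forests$ set $w(F) \coloneqq \prod_{(j,k) \in F} \transition_{jk}$, and for $i \in T$ let $\forests_i^\ell \subset \forests$ consist of those forests in which the unique path from $i$ reaches $\reclass_\ell$. The adapted Markov chain tree formula (cf.\ \cite{catoni_rare_transitions}) then reads
\[
\absor(i, \reclass_\ell) = \frac{\sum_{F \in \forests_i^\ell} w(F)}{\sum_{F \in \forests} w(F)}, \qquad i \in T, \ \ell \in [p],
\]
while for $i$ in a recurrent class $\absor(i, \reclass_\ell) \in \{0,1\}$, contributing nothing to the lowest common denominator.

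I would then clear denominators. Since every $F \in \forests$ contains exactly $\card{T}$ edges, one originating from each transient vertex, the quantity $\denprod_T w(F) = \prod_{j \in T} \comden_j \transition_{j, \mathrm{par}_F(j)}$ is a nonnegative integer. Setting
\[
D \coloneqq \denprod_T \sum_{F \in \forests} w(F), \qquad N_{i,\ell} \coloneqq \denprod_T \sum_{F \in \forests_i^\ell} w(F),
\]
we obtain $D, N_{i,\ell} \in \nnZ$ and $\absor(i, \reclass_\ell) = N_{i,\ell}/D$; consequently the lowest common denominator of the whole family divides the single positive integer $D$.

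The key step, analogous to the bound in \cref{thm:stationary}, is to control $D$. Each forest in $\forests$ is uniquely determined by its parent map and therefore corresponds to a function $T \to [n]$; since weights are nonnegative,
\[
D = \sum_{F \in \forests} \prod_{j \in T} \comden_j \transition_{j, \mathrm{par}_F(j)} \leq \sum_{f\colon T \to [n]} \prod_{j \in T} \comden_j \transition_{j, f(j)} = \prod_{j \in T} \Bigl(\sum_{k \in [n]} \comden_j \transition_{j,k}\Bigr) = \prod_{j \in T} \comden_j = \denprod_T.
\]
This proves the lowest common denominator is at most $\denprod_T$. The companion bound $\comden^{n-2}$ then follows by splitting on $p$: if $p = 1$ every absorption probability equals $1$; if $p \geq 2$ then $\card{T} \leq n - p \leq n - 2$, whence $\denprod_T \leq \comden^{\card{T}} \leq \comden^{n-2}$.

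The main technical obstacle is justifying the forest formula in the present multi-class setting. One clean route is to collapse each recurrent class $\reclass_\ell$ to a single absorbing vertex $r_\ell$, which preserves all absorption probabilities, and invoke the standard absorption version of the Markov chain tree formula on the reduced chain, as in \cite{catoni_rare_transitions}; an alternative is to apply Cramer's rule to the linear system satisfied by the absorption probabilities and appeal to the matrix-forest (all-minors Markov chain tree) theorem to identify the resulting determinants with the forest sums above.
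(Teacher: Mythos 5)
Your proposal is correct and follows essentially the same route as the paper: the same forest formula for absorption probabilities (the paper's \cref{absorption_formula}, derived there from Catoni's expected-visits lemma rather than by collapsing classes or the all-minors matrix tree theorem), the same observation that each forest rooted at the recurrent states takes exactly one entry from each transient row so that $\denprod_T w(F)\in\nnZ$, and the same bound $\sum_F w(F)\le\prod_{j\in T}\sum_k\transition_{jk}=1$. Your handling of the $\comden^{n-2}$ bound (splitting on $p=1$ versus $p\ge 2$ and using $\denprod_T\le\comden^{\card{T}}\le\comden^{n-2}$) is a harmless reorganization of the paper's case distinction on whether there are at least two recurrent states.
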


As previously, we have the inequality $\denprod_T \le \comden^{k_T}$, where $k_{T} \le k$ denotes the number of rows of $\transition$ that have at least two nonzero entries and represent transient states of the Markov chain, i.e.,
\[
k_T = \{i \in [n] \colon i \text{ is transient and there exists $j \in [n]$ such that } 0 < \transition_{ij} < 1 \} \, .
\]
In particular, we have
\begin{equation}\label{eq:trans_states}
\min\{\denprod_T, \comden^{n-2}\} \le \comden^{\min\{k_T,n-2\}} \,
\end{equation}
and this bound improves the bounds obtained in the literature using the Hadamard inequality~\cite{condon,auger_strozecki}. Furthermore, this bound is tight as shown in \cref{ex:absorption}. Combining \cref{cor:value_ergodic,thm:absorption,eq:value}, we obtain the following estimate on the gain vector in general chains.

\begin{corollary}\label{cor:value_general}
The numbers $(\gameval_i)_{i \in [n]}$ are rational and their lowest common denominator is not greater than $3^{s/2}\denprod$, where $s$ denotes the number of recurrent states of a Markov chain with transition matrix $\transition$.
\end{corollary}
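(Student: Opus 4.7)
The plan is to combine \cref{thm:absorption} and \cref{thm:stationary} with the decomposition formula \cref{eq:value}. Set $n_{\ell} \coloneqq |\reclass_{\ell}|$, so that $s = \sum_{\ell=1}^{p} n_{\ell}$, and let $\denprod^{(\ell)} \coloneqq \prod_{i \in \reclass_{\ell}} \comden_{i}$. Since $[n]$ is the disjoint union of $T$ and the recurrent classes $\reclass_{1}, \dots, \reclass_{p}$, we have the factorization
\[
\denprod = \denprod_{T} \cdot \prod_{\ell=1}^{p} \denprod^{(\ell)} \, .
\]

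Next, I would bound the two kinds of rational factors appearing in \cref{eq:value}. By \cref{thm:absorption}, the numbers $\absor(i, \reclass_{\ell})$ share a common denominator dividing $\denprod_{T}$. For each $\ell$, the restriction of $\transition$ to $\reclass_{\ell}$ is an irreducible stochastic matrix on $n_{\ell}$ states whose $i$th row still has common denominator dividing $\comden_{i}$, so applying \cref{thm:stationary} to this restriction yields that $\stdist^{(\ell)}$ has lowest common denominator dividing $n_{\ell} \denprod^{(\ell)}$; because $\payoff \in \Z^{n}$, the number $\eta^{(\ell)} = \sum_{j \in \reclass_\ell} \payoff_{j} \stdist^{(\ell)}_{j}$ then has denominator dividing $n_{\ell} \denprod^{(\ell)}$ as well. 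Plugging into \cref{eq:value} and factoring the common $1/\denprod_{T}$ out of the sum, the denominator of $\gameval_{i}$ divides
\[
\denprod_{T} \cdot \prod_{\ell=1}^{p} n_{\ell} \denprod^{(\ell)} = \denprod \cdot \prod_{\ell=1}^{p} n_{\ell} \, .
\]

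To finish, it suffices to show $\prod_{\ell=1}^{p} n_{\ell} \leq 3^{s/2}$. The key elementary inequality is $n \leq 3^{n/2}$ for every positive integer $n$, which is equivalent to $(\ln n)/n \leq (\ln 3)/2$ and follows from $(\ln n)/n \leq 1/e < (\ln 3)/2$. Taking the product of this inequality over $\ell$ gives the desired bound.

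No step is genuinely difficult: the corollary is essentially an assembly of the previous theorems through \cref{eq:value}. The only part that merits a moment of care is verifying that \cref{thm:stationary} applies cleanly to the restricted chain on each recurrent class with the claimed row-denominator bounds, but this is routine since entries outside $\reclass_{\ell}$ are zero and the denominators of the subrows can only divide $\comden_{i}$.
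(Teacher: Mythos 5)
Your proposal is correct and follows the same route as the paper: decompose $\gameval$ via \cref{eq:value}, apply \cref{thm:absorption} to the absorption probabilities and \cref{thm:stationary} to each recurrent class, and multiply the resulting denominators using $\denprod = \denprod_T \prod_{\ell}\denprod^{(\ell)}$ (note only that the theorems give common denominators \emph{bounded by} these quantities rather than dividing them, which suffices here). The one point where you diverge is the final estimate $\prod_{\ell}\card{\reclass_{\ell}} \le 3^{s/2}$: the paper gets it from the AM--GM inequality followed by maximizing $(s/p)^p$ over $p$, whereas your termwise inequality $n \le 3^{n/2}$ is simpler and equally valid.
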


The main difference between the bounds of \cref{cor:value_ergodic} and \cref{cor:value_general} is that the latter bound is exponential in $s$. The example presented in \cite{boros_gurvich_discounted_approximations} shows that this is unavoidable in general chains even if $k = 1$.

To state our final result, we recall the notion of a bias vector. Given $\transition$ and $\payoff$, \cite[Theorem~8.2.6]{puterman} shows that the gain $\gameval$ can be found by solving the system of equalities
\begin{equation}\label{eq:bias}
\begin{cases}
\transition \gameval &= \gameval \\
\transition \bias &=  \gameval + \bias - \payoff
\end{cases}
\end{equation}
in variables $(\gameval, \bias) \in \R^{2n}$. More precisely, \cref{eq:bias} has a solution and any such solution $(\gameval', \bias)$ satisfies $\gameval' = \gameval$. If $(\gameval, \bias)$ is a solution of \cref{eq:bias}, then we say that $\bias$ is a \emph{bias} vector. In general, a bias vector is not unique, even up to an additive constant. Bias vectors play an important role in the policy iteration algorithms for Markov decision processes~\cite[Chapter~9]{puterman} and for stochastic mean payoff games~\cite{cdcdetournay}. Moreover, in a recent work, Allamigeon, Gaubert, Katz, and Skomra proposed a condition number for stochastic mean payoff games that governs the complexity of the value iteration algorithm~\cite{mtns2018}. This condition number depends on the quantity $\inf_{\bias}\|\bias\|_{H}$, where the infimum goes over all bias vectors of the Shapley operator associated with a stochastic mean payoff game, and $\| \cdot \|_{H}$ denotes the \emph{Hilbert seminorm}, $\|\bias\|_{H} \coloneqq \max_i \bias_i - \min_i \bias_i$. Since $\|\bias\|_{H} \le 2\linf{\bias}$, one can use the supremum norm to bound $\inf_{\bias}\|\bias\|_{H}$. The complexity estimates on value iteration obtained in \cite{mtns2018} rely on \cref{thm:stationary} and on the following result.

\begin{theorem}\label{thm:bias}
Suppose that $\bias \in \R^n$ is a bias vector of $(\payoff, \transition)$. We have the following estimates:
\begin{enumerate}[i)]
\item if for every $\ell \in [p]$ there exists $i_{\ell} \in \reclass_{\ell}$ such that $\bias_{i_{\ell}} = 0$, then $\linf{\bias} \le 2\linf{r}n\min\{\denprod, \comden^{n-1}\}$;
\item if $\sum_{i \in \reclass_{\ell}}\bias_i\stdist^{(\ell)}_i = 0$ for all $\ell \in [p]$, then $\linf{\bias} \le 4\linf{r}n\min\{\denprod, \comden^{n-1}\}$.
\end{enumerate}
\end{theorem}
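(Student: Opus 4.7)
The plan is to give a probabilistic representation of $\bias_i$ as the expected sum of centered payoffs up to a suitable stopping time, and then to bound that stopping time by applying \cref{thm:stationary} to a cleverly chosen auxiliary irreducible chain. For case~(i), introduce the stopping time $\tau_\ast \coloneqq \min\{t \ge 0 : X_t \in \{i_1, \ldots, i_p\}\}$, which is almost surely finite because every trajectory enters some recurrent class $\reclass_\ell$ and, by irreducibility of $\reclass_\ell$, eventually visits $i_\ell$. Iterating the bias equation $(I - \transition)\bias = \payoff - \gameval$, invoking the strong Markov property at $\tau_\ast$, and using $\bias_{i_\ell} = 0$ for each $\ell$, one obtains
\[
\bias_i = \E_i\!\left[\sum_{t = 0}^{\tau_\ast - 1}\bigl(\payoff_{X_t} - \gameval_{X_t}\bigr)\right] ,
\]
so that $|\bias_i| \le 2\linf{\payoff}\E_i[\tau_\ast]$ (since $|\gameval_j| \le \linf{\payoff}$ for every $j$). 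It thus suffices to prove $\E_i[\tau_\ast] \le n\min\{\denprod, \comden^{n - 1}\}$, the case $i \in \{i_1, \ldots, i_p\}$ being trivial.

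To bound $\E_i[\tau_\ast]$, I would form the auxiliary transition matrix $\tilde{\transition}$ on $[n]$ that agrees with $\transition$ except on the rows of the designated states, where $\tilde{\transition}_{i_\ell, v} = \ind_{v = i}$ for every $\ell$ and $v$. Let $\sea_i$ denote the set of states reachable from $i$ under $\tilde{\transition}$. A direct analysis shows that $\tilde{\transition}|_{\sea_i}$ is irreducible: each $j \in \sea_i$ reaches some recurrent class of $\transition$ and hence some $i_\ell \in \sea_i$ under $\transition$, from which $\tilde{\transition}$ jumps back to $i$. Pick any $\ell$ with $p_\ell \coloneqq \Prob_i(X_{\tau_\ast} = i_\ell) > 0$ (such an $\ell$ exists since $\tau_\ast < \infty$ a.s.). A renewal argument that decomposes every excursion of $\tilde{\transition}$ starting from $i_\ell$ into a geometric number of failed excursions from $i$ ending at some $i_{\ell'} \ne i_\ell$ followed by one successful excursion ending at $i_\ell$ yields the identity
\[
\E_i^{\transition}[\tau_\ast] + 1 = p_\ell \cdot \E_{i_\ell}^{\tilde{\transition}}\bigl[\tau_{i_\ell}^+\bigr] = \frac{p_\ell}{\tilde{\stdist}_{i_\ell}} \le \frac{1}{\tilde{\stdist}_{i_\ell}} ,
\]
where $\tilde{\stdist}$ is the stationary distribution of $\tilde{\transition}|_{\sea_i}$ and the middle equality is Kac's formula. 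Since the designated rows of $\tilde{\transition}$ have denominator $1$ and $\sea_i$ contains at least one designated state, the product of the row denominators of $\tilde{\transition}|_{\sea_i}$ is at most $\min\{\denprod, \comden^{|\sea_i| - 1}\} \le \min\{\denprod, \comden^{n - 1}\}$; applying \cref{thm:stationary} to $\tilde{\transition}|_{\sea_i}$ then gives $\tilde{\stdist}_{i_\ell} \ge 1/(n\min\{\denprod, \comden^{n - 1}\})$, which closes case~(i).

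For case~(ii), I would reduce to case~(i) by subtracting a harmonic correction. Setting $c_\ell \coloneqq \bias_{i_\ell}$ and defining $\tilde{c} \in \R^n$ by $\tilde{c}_i = c_\ell$ for $i \in \reclass_\ell$ and $\tilde{c}_i = \sum_{\ell = 1}^{p} c_\ell \absor(i, \reclass_\ell)$ for transient $i$, the identity $\absor(\cdot, \reclass_\ell) = \transition \absor(\cdot, \reclass_\ell)$ gives $\transition \tilde{c} = \tilde{c}$. Hence $\bias' \coloneqq \bias - \tilde{c}$ is again a bias vector of $(\payoff, \transition)$ and it satisfies $\bias'_{i_\ell} = 0$, so case~(i) yields $\linf{\bias'} \le 2\linf{\payoff}n\min\{\denprod, \comden^{n - 1}\}$. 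The hypothesis~(ii) rewrites as $c_\ell = -\sum_{k \in \reclass_\ell}\bias'_k \stdist^{(\ell)}_k$, whence $|c_\ell| \le \linf{\bias'}$ for every $\ell$; because $\tilde{c}_i$ is a convex combination of the $c_\ell$'s, one also has $\linf{\tilde{c}} \le \linf{\bias'}$, and the triangle inequality gives $\linf{\bias} \le \linf{\bias'} + \linf{\tilde{c}} \le 4\linf{\payoff}n\min\{\denprod, \comden^{n - 1}\}$, as required. The main obstacle in this plan is the renewal identity for $\E_i[\tau_\ast]$, together with the bookkeeping of the denominators of $\tilde{\transition}|_{\sea_i}$ ensuring that exactly the factor $n$ (and not $n + 1$, which a naive auxiliary chain with an extra marker state would yield) appears in the final bound.
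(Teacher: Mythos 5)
Your proof is correct, but it takes a genuinely different route from the paper. The paper's argument is linear-algebraic: it writes the bias on an open set $\sea$ as $\hat{\bias} = (\Id - \hat{\transition})^{-1}(\cdots)$ via \cref{le:inverse_transient}, and the quantitative heart is a separate lemma (\cref{le:estimate_visits}) bounding each entry $\visits{\sea}{\state}{\stateII}$ of the fundamental matrix by $\prod_{j \in \sea}\comden_j$, proved by exhibiting a simple escape path of probability at least $1/\prod_{j\in\sea}\comden_j$ and summing a geometric series; the general case is then assembled by treating recurrent classes and transient states separately. You instead represent $\bias_i$ by optional stopping at the hitting time $\tau_\ast$ of $\{i_1,\dots,i_p\}$ and bound $\E_i[\tau_\ast]$ by Kac's formula for an auxiliary irreducible chain obtained by rerouting the designated states back to $i$, which lets you invoke \cref{thm:stationary} itself as the quantitative input. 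Your denominator bookkeeping is right (the rerouted rows have denominator $1$, the remaining rows of $\tilde{\transition}|_{\sea_i}$ are full rows of $\transition$ because $\sea_i$ is closed under successors, and $\sea_i$ contains a designated state, so the product of row denominators is at most $\min\{\denprod,\comden^{|\sea_i|-1}\}$), and the Wald/renewal identity $\E_i[\tau_\ast]+1 = p_\ell\,\E_{i_\ell}[\tau_{i_\ell}^{+}]$ checks out. Your reduction of case~(ii) to case~(i) by subtracting the $\transition$-harmonic vector $\tilde{c}$ built from absorption probabilities is also sound and is in fact more explicit than the paper's treatment, which handles (ii) in the irreducible case via the Hilbert seminorm and then only gestures at the multichain extension. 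What your approach buys is a self-contained derivation that recycles the main theorem instead of introducing \cref{le:estimate_visits}, and it avoids the transient/recurrent case split; what it costs is a heavier probabilistic apparatus (optional stopping, strong Markov, Wald) where the paper only needs the fundamental-matrix identity and a one-line escape-probability estimate. The only points you should spell out in a final write-up are the integrability of $\tau_\ast$ (geometric tails on a finite state space) needed for optional stopping, and the Wald argument behind the renewal identity, both of which are routine.
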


The discussion in \cite[Section~8.2.3]{puterman} implies that bias vectors of both kinds exist for any pair $(\payoff, \transition)$. Furthermore, these types of bias vectors are particularly useful in the policy iteration algorithms for Markov decision processes~\cite[Section~9.2]{puterman}.

A preliminary version of the results presented in this note appeared in the PhD thesis of the author~\cite[Chapter~8]{skomra_phd}. We note that \cref{thm:absorption}, which was only briefly mentioned in \cite[Remark~8.46]{skomra_phd}, has since been obtained independently by Auger, Badin de Montjoye, and Strozecki~\cite[Theorem~23]{auger_montjoye_strozecki}, using a similar technique (the proof in \cite{auger_montjoye_strozecki} is based on the matrix tree theorem).

\subsection{Organization of the paper} The rest of the paper is organized as follows. In \cref{sec:trees} we present the necessary notions on directed trees and forests, which are used in the Markov chain tree formula. In \cref{sec:markov} we present this formula and its adaptation to absorption probabilities. \Cref{sec:main} contains the proofs of \cref{thm:stationary,thm:absorption} and their corollaries. Finally, we present the proof of \cref{thm:bias} in \cref{sec:bias}.

\section{Preliminaries}

\subsection{Rooted trees and forests}\label{sec:trees}
Let $\dgraph = (\vertices, \edges)$ be a directed graph. In this paper, we allow a directed graph to have loops, but not multiple edges, i.e., we suppose that $\edges$ is a subset of $\{(u,v) \colon u,v \in \vertices\}$. If $\subedges \subset \edges$ is a subset of edges, then we denote by $\dgraph(\subedges) = (\vertices, \subedges)$ the subgraph that consists of all the vertices of $\dgraph$, but the edges taken only from $\subedges$.

\begin{definition}\label{def:forest}
Let $\subedges \subset \edges$. We say that the graph $\dgraph(\subedges)$ is a \emph{rooted forest} if it does not have any directed cycles and every vertex of $\dgraph(\subedges)$ has at most one outgoing edge. We say that a vertex $\vertex \in \vertices$ is a \emph{root} of $\dgraph(\subedges)$ if it has no outgoing edges. We say that $\dgraph(\subedges)$ is a \emph{rooted tree} if it is a rooted forest and has exactly one root.
\end{definition}

(In \cref{def:forest} we use the convention that a loop is a directed cycle, so that a rooted forest does not have any loops.) \Cref{fig:forest} depicts a rooted forest. Before presenting the relationship between the rooted forests and Markov chains, let us give a few comments about \cref{def:forest}. First, we point out that a rooted forest is, indeed, a forest, i.e., it does not contain any undirected cycle. This follows from the fact that any undirected cycle that does not come from a directed cycle contains a vertex with two outgoing edges. Second, we note that the number of connected components of a rooted forest is equal to the number of its roots. This follows from the fact that from every vertex there is a unique directed path leading to a root. In particular, a rooted tree has one connected component, i.e., it is a tree. The following remark explains the vocabulary and conventions used in \cref{def:forest}.

\begin{figure}[t]
\begin{center}
\centering
\begin{tikzpicture}[scale=0.9,>=stealth',row/.style={draw,circle,minimum size=1.2cm},col/.style={draw,rectangle,minimum size=0.5cm},av/.style={draw, circle,fill, inner sep = 0pt,minimum size = 0.2cm}]

\node[row] (i1) at (3, 4) {$1$};
\node[row] (i2) at (9,4) {$2$};

\node[row] (i3) at (2,2) {$3$};
\node[row] (i4) at (9,2) {$5$};

\node[row] (i5) at (10,0) {$10$};
\node[row] (i6) at (8,0) {$9$};

\node[row] (i7) at (1,0) {$6$};
\node[row] (i8) at (4,2) {$4$};

\node[row] (i9) at (3,0) {$7$};
\node[row] (i10) at (5,0) {$8$};

\draw[->] (i4) to (i2);
\draw[->] (i3) to (i1);
\draw[->] (i5) to (i4);
\draw[->] (i6) to (i4);
\draw[->] (i7) to (i3);
\draw[->] (i8) to (i1);
\draw[->] (i9) to (i8);
\draw[->] (i10) to (i8);

\end{tikzpicture}
\end{center}
\caption{A rooted forest with two roots.}\label{fig:forest}
\end{figure}
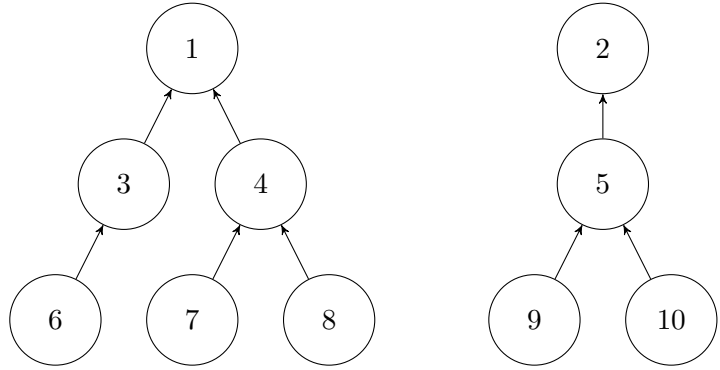

\begin{remark}
Since we suppose that $\dgraph(\subedges)$ contains all the vertices of $\dgraph$, the objects that we are considering are, in fact, \emph{spanning} rooted forests and trees. Since we never consider forests that are not spanning, we drop the word ``spanning'' from the definition. We also point out that rooted forests are also called \emph{branchings} and rooted trees are called \emph{arborescences}. Furthermore, we note that in our definition, rooted forests are oriented in such a way that every edge points ``towards the root.'' However, the opposite convention is commonly used in the literature, cf. \cite{edmonds_branchings} or \cite[Section~3.2]{schrijver_comb_opt}. The choice of orientation that we made in \cref{def:forest} is justified by the fact that the orientation ``towards the root'' corresponds to the direction of transition between states in the Markov chains that we discuss in \cref{sec:markov}.
\end{remark}

\subsection{Combinatorial formulas for Markov chains}\label{sec:markov}

As in the introduction, we denote by $\mchain \coloneqq (\rvar_{0}, \rvar_{1}, \dots)$ a Markov chain on the space $\states \coloneqq [n]$ with transition matrix $\transition$. We start by recalling the definition of an open set, see \cite[\S~3.5]{kemeny_snell}.

\begin{definition}\label{def:open}
We say that a nonempty subset of sets $\sea \subset \states$ is \emph{open} if the chain starting at any state $\state \in \sea$ can leave $\sea$ with nonzero probability,
\[
\forall \state \in \sea, \ \Prob(\exists \ell, \rvar_{\ell} \notin \sea \mid \rvar_{0} = \state) > 0 \, .
\]
\end{definition}
We note that the definition above does not imply that the chain cannot come back to $\sea$ after leaving it. In particular, $\sea$ can contain some recurrent states, but not a whole recurrent class. This also implies that the condition of \cref{def:open} can be replaced by a stronger one---if $\sea$ is open and $\state \in \sea$, then the chain starting at $\state$ will leave $\sea$ almost surely.

If $\sea \subset \states$ is open, we denote by $\rvarII_{\sea} \coloneqq \inf\{\ell \ge 1 \colon \rvar_{\ell} \notin \sea \}$ the moment when the chain leaves $\sea$ for the first time. Furthermore, for every $\state,\stateII \in \sea$ we let
\[
\visits{\sea}{\state}{\stateII}  \coloneqq \E\Bigl(\sum_{\ell = 0}^{\rvarII_{\sea} - 1} \ind_{\{\rvar_{\ell} = \stateII\} } \Big | \rvar_{0} = \state\Bigr)
\]
be the expected number of visits in $\stateII$ before leaving $\sea$, provided that the chain starts at $\state$. The following result gives a formula for computing $\visits{\sea}{\state}{\stateII}$.

\begin{lemma}[{\cite[Theorem~3.5.4(1)]{kemeny_snell}}]\label{le:inverse_transient}
Let $\hat{\transition}^{\sea \times \sea}$ denote the submatrix of $\transition$ formed by the rows and columns from $\sea$. Then, the matrix $(\Id - \hat{\transition})$ is invertible and $(\Id - \hat{\transition})^{-1}_{\state\stateII} = \visits{\sea}{\state}{\stateII}$ for all $\state,\stateII \in \sea$.
\end{lemma}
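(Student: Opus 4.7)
The plan is to reduce the statement to the Neumann series expansion of $(\Id - \hat{\transition})^{-1}$, which splits the proof into two steps: (i) show that the spectral radius of $\hat{\transition}$ is strictly less than $1$, so that $(\Id - \hat{\transition})$ is invertible with $(\Id - \hat{\transition})^{-1} = \sum_{\ell \ge 0} \hat{\transition}^{\ell}$; and (ii) match each entry of this series with the expected visit count $\visits{\sea}{\state}{\stateII}$.

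For step (i), I would upgrade the pointwise openness assumption of \cref{def:open} to a uniform quantitative bound using the finiteness of $\sea$: there exist $N \in \N$ and $\epsilon > 0$ such that $\Prob(\rvarII_{\sea} \le N \mid \rvar_{0} = \state) \ge \epsilon$ for every $\state \in \sea$. Applying the Markov property at times $N, 2N, \dots$ yields $\Prob(\rvarII_{\sea} > kN \mid \rvar_{0} = \state) \le (1 - \epsilon)^{k}$. On the other hand, for $\state \in \sea$ and $\ell \ge 0$, the event $\{\rvarII_{\sea} > \ell\}$ coincides with $\{\rvar_{1}, \dots, \rvar_{\ell} \in \sea\}$, whose probability under $\rvar_{0} = \state$ equals the $\state$-th row sum of $\hat{\transition}^{\ell}$. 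Hence $\hat{\transition}^{\ell} \to 0$ entrywise at a geometric rate, giving $\rho(\hat{\transition}) < 1$ and legitimizing the Neumann series.

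For step (ii), a direct expansion of the $\ell$-fold matrix product shows that, for $\state, \stateII \in \sea$,
\[
(\hat{\transition}^{\ell})_{\state\stateII} = \sum_{x_{1}, \dots, x_{\ell - 1} \in \sea} \transition_{\state x_{1}} \transition_{x_{1} x_{2}} \cdots \transition_{x_{\ell - 1} \stateII} = \Prob(\rvar_{\ell} = \stateII, \; \rvarII_{\sea} > \ell \mid \rvar_{0} = \state),
\]
since $\stateII \in \sea$ forces the path to have stayed in $\sea$ through time $\ell$. Summing over $\ell \ge 0$ and applying Tonelli,
\[
\sum_{\ell = 0}^{\infty} (\hat{\transition}^{\ell})_{\state\stateII} = \E\Bigl( \sum_{\ell = 0}^{\rvarII_{\sea} - 1} \ind_{\{\rvar_{\ell} = \stateII\}} \;\Big|\; \rvar_{0} = \state \Bigr) = \visits{\sea}{\state}{\stateII},
\]
which combined with step (i) gives $(\Id - \hat{\transition})^{-1}_{\state\stateII} = \visits{\sea}{\state}{\stateII}$.

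The only real obstacle is the passage from the pointwise openness condition in \cref{def:open} to the uniform bound at the start of step (i); once this uniformization is done, the rest is a routine unwinding of the path-counting interpretation of $\hat{\transition}^{\ell}$. The finiteness of $\sea$ makes the uniformization essentially automatic, but it is worth being explicit about it, since this is the only place where the openness hypothesis is actually used.
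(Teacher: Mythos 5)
The paper does not prove this lemma at all: it is quoted directly from Kemeny and Snell \cite[Theorem~3.5.4(1)]{kemeny_snell}, so there is no internal argument to compare against. Your proof is correct and is essentially the standard fundamental-matrix argument from that reference — the Neumann series $\sum_{\ell\ge 0}\hat{\transition}^{\ell}$, justified by uniformizing the pointwise openness of $\sea$ into a geometric tail bound on $\rvarII_{\sea}$, followed by the path-counting identification of $(\hat{\transition}^{\ell})_{\state\stateII}$ with $\Prob(\rvar_{\ell}=\stateII,\ \rvarII_{\sea}>\ell \mid \rvar_{0}=\state)$.
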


An alternative combinatorial formula for $\visits{\sea}{\state}{\stateII}$ is given by Catoni in \cite{catoni_rare_transitions}, where it is used to derive the Markov chain tree formula. In order to introduce it, we recall that a finite Markov chain is naturally associated with a directed graph representing its transitions. More precisely, we define the graph $\dgraph = (\states, \edges)$ by the condition $(\state, \stateII) \in \edges \iff \transition_{\state\stateII} > 0$. We can consider that this graph is weighted, with weights $\weight(\edge)$ that correspond to the probabilities of transitions, i.e., if $\edge = (\state, \stateII) \in \edges$, then $\weight(\edge) \coloneqq \transition_{\state\stateII}$. We also extend this definition to subsets of edges by taking the product. More precisely, if $\subedges \subset \edges$, then we define the \emph{weight} of $\subedges$ as the product of the weights of its elements,
\[
\weight(\subedges) \coloneqq \prod_{\edge \in \subedges} \weight(\edge) = \prod_{(\state, \stateII) \in \subedges} \transition_{\state\stateII} \, .
\]
The formulas that we present below consider only subsets of edges that give rise to rooted forests and trees. To this end, we denote
\begin{align*}
\forests(\roots) \coloneqq \{\subedges \subset \edges \colon &\dgraph(\subedges) \text{ is a rooted forest} \\ &\text{and its set of roots is equal to $\roots$} \} \, .
\end{align*}
It is also useful to consider the forests that contain a directed path between two fixed vertices. We denote
\begin{align*}
\forests_{\state\stateII}(\roots) \coloneqq \{\subedges \in \forests(\roots) \colon &\text{$\dgraph(\subedges)$ contains a directed path from $\state$ to $\stateII$} \} \, .
\end{align*}
We use the convention that if $\state = \stateII$, then $\forests_{\state\stateII}(\roots) \coloneqq \forests(\roots)$. If $\roots = \{\stateIII\}$ is a singleton, then we use the notation $\forests(\stateIII)$ and $\forests_{\state \stateII}(\stateIII)$ instead of $\forests(\{\stateIII\})$, $\forests_{\state \stateII}(\{\stateIII\})$. In this way, we can think of $\forests(\stateIII)$ as the set of rooted trees whose root is $\stateIII$.

\begin{lemma}[{\cite[Lemma~3.1]{catoni_rare_transitions}}]\label{visits_open}
Suppose that $\sea \subset \states$ is open and denote $\sources \coloneqq \states \setminus \sea$. Then, for every $\state, \stateII \in \sea$ we have
\[
\visits{\sea}{\state}{\stateII} = \Bigl( \sum_{\subedges \in \forests_{\state \stateII}(\sources \cup \{\stateII\})}\weight(\subedges) \Bigr) \Bigl( \sum_{\subedges \in \forests(\sources)} \weight(\subedges) \Bigr)^{-1} \, .
\]
\end{lemma}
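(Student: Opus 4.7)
The plan is to combine \cref{le:inverse_transient} with Cramer's rule and then identify each of the two resulting determinants with a weighted sum over rooted forests via the matrix-tree theorem. Writing $\hat{\transition} \coloneqq \hat{\transition}^{\sea \times \sea}$ and $M \coloneqq \Id - \hat{\transition}$, \cref{le:inverse_transient} gives $\visits{\sea}{\state}{\stateII} = M^{-1}_{\state\stateII}$, and Cramer's rule yields
\[
\visits{\sea}{\state}{\stateII} = \frac{(-1)^{\state+\stateII}\det(M^{\hat{\stateII}, \hat{\state}})}{\det M} \, ,
\]
where $M^{\hat{\stateII}, \hat{\state}}$ denotes the submatrix of $M$ obtained by deleting row $\stateII$ and column $\state$. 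It therefore suffices to prove the two combinatorial identities
\[
\det M = \sum_{\subedges \in \forests(\sources)} \weight(\subedges)
\]
and
\[
(-1)^{\state+\stateII}\det(M^{\hat{\stateII}, \hat{\state}}) = \sum_{\subedges \in \forests_{\state\stateII}(\sources \cup \{\stateII\})}\weight(\subedges) \, .
\]

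For the first identity, row-stochasticity of $\transition$ implies that $M$ coincides with the principal submatrix on $\sea$ of the Laplacian-like matrix $L$ of $(\dgraph, \weight)$ defined by $L_{\vertex\vertex} = 1 - \transition_{\vertex\vertex}$ and $L_{\vertex\vertexII} = -\transition_{\vertex\vertexII}$ for $\vertex \neq \vertexII$. The principal-minor form of the matrix-tree theorem then gives $\det M = \sum_{\subedges \in \forests(\sources)} \weight(\subedges)$. The standard proof expands $\det M$ via Leibniz, splits each diagonal factor $1 - \transition_{\vertex\vertex}$ into $1 + (-\transition_{\vertex\vertex})$, and applies a sign-reversing involution that toggles the smallest directed cycle (in a fixed vertex order) present in the resulting functional subgraph. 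This pairs off and cancels every configuration containing a cycle, leaving only forests with roots in $\sources$, each contributing $\weight(\subedges)$ with a positive sign.

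For the second identity, I would appeal to Chaiken's all-minors matrix-tree theorem: the same Leibniz expansion applied to $M^{\hat{\stateII}, \hat{\state}}$, combined with the sign $(-1)^{\state+\stateII}$, leaves as survivors exactly the spanning rooted forests with root set $\sources \cup \{\stateII\}$ in which the tree containing $\state$ is rooted at $\stateII$, \ie~forests that contain a directed path from $\state$ to $\stateII$. The case $\state = \stateII$ reduces to the first identity applied with the enlarged root set $\sources \cup \{\stateII\}$, using the convention $\forests_{\stateII\stateII}(\sources \cup \{\stateII\}) = \forests(\sources \cup \{\stateII\})$. The main obstacle is the sign bookkeeping for this all-minors case: one must match the cofactor sign $(-1)^{\state+\stateII}$ with the signature of the unique directed path from $\state$ to $\stateII$ inside each spanning tree, which requires combining the cycle-toggling involution with a permutation-based argument along that path. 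Everything else is routine expansion.
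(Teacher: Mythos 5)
Your argument is sound in outline, but note that the paper does not actually prove \cref{visits_open}: it is quoted from Catoni's Lemma~3.1, with only a remark that Catoni's proof (stated there for nontrivial subsets of irreducible chains) extends to arbitrary open sets of finite chains. So you are supplying a proof where the paper supplies a citation. Your route --- \cref{le:inverse_transient} plus Cramer's rule, then the directed matrix-tree theorem for the two determinants --- is a standard and correct alternative; it is essentially the technique the paper attributes to Auger, Badin de Montjoye and Strozecki for the closely related absorption formula. The identification of $\Id - \hat{\transition}$ with the principal $\sea\times\sea$ block of the out-degree Laplacian is right (row-stochasticity gives $1-\transition_{\vertex\vertex}=\sum_{\stateIII\neq\vertex}\transition_{\vertex\stateIII}$, so loops disappear, consistent with the paper's convention that rooted forests contain no loops), and your description of the forests surviving in the cofactor --- root set exactly $\sources\cup\{\stateII\}$ with $\state$ lying in the tree rooted at $\stateII$ --- is precisely $\forests_{\state\stateII}(\sources\cup\{\stateII\})$, including the degenerate case $\state=\stateII$.

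Two points need tightening before this is a complete proof rather than a reduction to Chaiken's theorem. First, in the involution proof of the principal-minor identity the diagonal entry should be expanded as $\sum_{\stateIII\neq\vertex}\transition_{\vertex\stateIII}$, one term per outgoing edge of $\vertex$ in $\dgraph$; splitting it as $1+(-\transition_{\vertex\vertex})$ leaves a ``$1$'' with no graph-theoretic meaning and does not produce the functional subgraphs on which the cycle-cancelling involution acts. Second, the sign $(-1)^{\state+\stateII}$ must be taken with $\state,\stateII$ denoting the positions of these states in an ordering of $\sea$, not their labels in $\states$, and you explicitly postpone the verification that every surviving term in the all-minors expansion carries sign $+1$ --- which is the only genuinely delicate step of this approach. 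Since the all-minors matrix-tree theorem is classical and citable, neither point is fatal, but as written the hardest part of the argument is the part you defer.
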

\begin{remark}
We note that \cref{visits_open} is stated in \cite{catoni_rare_transitions} only for nontrivial subsets of irreducible chains, but the proof presented in \cite{catoni_rare_transitions} applies to arbitrary open sets of finite Markov chains.
\end{remark}

\Cref{visits_open} leads to the following two corollaries. The first one characterizes the stationary distributions of irreducible Markov chains. This corollary is known as the Markov chain tree formula, and was discovered by numerous authors~\cite{freidlin_wentzell,kohler_vollmerhaus,shubert,solberg}, see also \cite{pitman_tang} for more information. The second one characterizes the probabilities of absorption in different recurrent classes. We give the proof of the second corollary, since it is not stated in \cite{catoni_rare_transitions}.

\begin{corollary}[Markov chain tree formula, {\cite[Lemma~3.2]{catoni_rare_transitions}}]\label{st_dist_formula}
Suppose that the Markov chain is irreducible. Then, its stationary distribution $\stdist \in \interval[open left]{0}{1}^{\states}$ is given by the formula
\[
\forall \stateII \in \states, \ \stdist_{\stateII} = \Bigl( \sum_{\subedges \in \forests(\stateII)}\weight(\subedges) \Bigr) \Bigl( \sum_{\state \in \states}\sum_{\subedges \in \forests(\state)}\weight(\subedges) \Bigr)^{-1} \, .
\]
\end{corollary}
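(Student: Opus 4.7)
The plan is to deduce the formula from \cref{visits_open} together with the classical renewal interpretation of the stationary distribution. Fix a reference state $\state_0 \in \states$ and set $\sea := \states \setminus \{\state_0\}$; irreducibility ensures that $\sea$ is open in the sense of \cref{def:open}. Let $\rvarII := \inf\{\ell \ge 1 : \rvar_\ell = \state_0\}$ be the first return time to $\state_0$ and define $N_\stateII := \E\bigl(\sum_{\ell=0}^{\rvarII-1} \ind_{\{\rvar_\ell = \stateII\}} \mid \rvar_0 = \state_0\bigr)$. The ergodic theorem for irreducible finite chains (see \cite[Part~I, \S~6--\S~9]{chung_markov_chains}) guarantees that $N$ is a stationary measure with $N_{\state_0} = 1$, so by uniqueness of the stationary distribution $\stdist_\stateII = N_\stateII / \sum_\state N_\state$.

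By conditioning on the first step, for $\stateII \in \sea$ one has $N_\stateII = \sum_{\state \in \sea} \transition_{\state_0 \state} \visits{\sea}{\state}{\stateII}$; the term with $\state = \state_0$ drops because a self-loop at $\state_0$ ends the excursion before any visit to $\stateII$. Applying \cref{visits_open} with $\sources = \{\state_0\}$ then gives
\[
N_\stateII \,=\, \frac{\sum_{\state \in \sea} \transition_{\state_0 \state} \sum_{\subedges \in \forests_{\state \stateII}(\{\state_0, \stateII\})} \weight(\subedges)}{\sum_{\subedges \in \forests(\state_0)} \weight(\subedges)} \, .
\]

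The heart of the argument is the combinatorial identity that the numerator above equals $\sum_{\subedges \in \forests(\stateII)} \weight(\subedges)$. I would establish a weight-preserving bijection between $\forests(\stateII)$ and the disjoint union $\bigsqcup_{\state \in \sea} \{(\state_0, \state)\} \times \forests_{\state \stateII}(\{\state_0, \stateII\})$. Given a tree $\subedges \in \forests(\stateII)$, the vertex $\state_0$ has exactly one outgoing edge in $\subedges$ (it is not the root, and rooted trees are loopless), which must land in some $\state \in \sea$; removing this edge yields a forest in $\forests(\{\state_0, \stateII\})$ that retains the directed path from $\state$ to $\stateII$. Conversely, attaching $(\state_0, \state)$ to any $\subedgesII \in \forests_{\state \stateII}(\{\state_0, \stateII\})$ keeps each vertex with at most one outgoing edge (since $\state_0$ had none in $\subedgesII$) and cannot create a directed cycle (any such cycle would have to traverse $\state_0$, but $\state_0$ is unreachable from itself in $\subedgesII$ as it was a root there), so it produces a tree in $\forests(\stateII)$. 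Weights clearly multiply correctly since $\weight$ is defined as a product over edges.

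Substituting the identity gives $N_\stateII = \bigl(\sum_{\subedges \in \forests(\stateII)}\weight(\subedges)\bigr) / \bigl(\sum_{\subedges \in \forests(\state_0)}\weight(\subedges)\bigr)$ for $\stateII \in \sea$, and this equality trivially holds for $\stateII = \state_0$ since both $N_{\state_0} = 1$ and the ratio equals one. Normalizing by $\sum_\state N_\state$, the $\state_0$-dependent denominator cancels and the claimed formula follows. The main subtlety is the combinatorial bijection---in particular the absence of cycles after re-attaching the edge, and handling the boundary case $\stateII = \state_0$ separately, since \cref{visits_open} is only invoked for $\stateII \in \sea$---but once this is established, the rest is bookkeeping.
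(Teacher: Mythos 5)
The paper does not actually prove this corollary---it is quoted from \cite[Lemma~3.2]{catoni_rare_transitions}---so there is no in-paper proof to compare against; the closest analogue is the paper's proof of \cref{absorption_formula}, which your argument closely parallels: apply \cref{visits_open}, multiply by a one-step transition probability, and absorb that factor into the forest sum by attaching the corresponding edge. Your derivation is correct. Fixing $\state_0$, representing the stationary measure by the expected excursion occupation times $N_{\stateII}$, and invoking \cref{visits_open} with $\sources = \{\state_0\}$ reduces everything to the identity
\[
\sum_{\state \in \sea}\transition_{\state_0\state}\sum_{\subedges \in \forests_{\state\stateII}(\{\state_0,\stateII\})}\weight(\subedges) \;=\; \sum_{\subedges \in \forests(\stateII)}\weight(\subedges) \, ,
\]
and your removal/attachment bijection establishes it; the common denominator $\sum_{\subedges \in \forests(\state_0)}\weight(\subedges)$ then cancels upon normalization exactly as you say.

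Two justifications in the bijection should be tightened. First, when you delete the unique out-edge $(\state_0,\state)$ of a tree rooted at $\stateII$, the reason the directed path from $\state$ to $\stateII$ survives is that the unique out-path from $\state$ cannot pass through $\state_0$ (it would then have to continue along $(\state_0,\state)$ and cycle back to $\state$, never reaching the root); you assert the survival without saying why. Second, and more substantively, your stated reason that re-attaching $(\state_0,\state)$ creates no cycle---that ``$\state_0$ is unreachable from itself in $\subedgesII$ as it was a root there''---is not the right invariant: a root of a rooted forest is typically reachable \emph{from} other vertices, so being a root does not rule out a directed path from $\state$ to $\state_0$ in $\subedgesII$, which is precisely what a new cycle would require. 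What rules it out is the hypothesis $\subedgesII \in \forests_{\state\stateII}(\{\state_0,\stateII\})$: the forest splits into two trees, one rooted at $\state_0$ and one at $\stateII$, and the required path from $\state$ to $\stateII$ places $\state$ in the component of $\stateII$, so no path from $\state$ to $\state_0$ exists. This condition is doing real work---without it the attachment map genuinely creates cycles---so it should be invoked explicitly at that step. With these two clarifications the argument is complete.
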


\begin{corollary}\label{absorption_formula}
Let $\recstates \subset \states$ denote the set of all recurrent states of the Markov chain and let $\reclass \subset \recstates$ be a recurrent class. Suppose that $\state \in \states$ is a transient state and let $\absor(\state, \reclass)$ be the probability that the chain starting at $\state$ reaches $\reclass$. Then, we have the equality
\[
\absor(\state, \reclass) = \Bigl( \sum_{\stateII \in \reclass} \, \sum_{\subedges \in \forests_{\state \stateII}(\recstates)}\weight(\subedges) \Bigr) \Bigl( \sum_{\subedges \in \forests(\recstates)} \weight(\subedges) \Bigr)^{-1} \, .
\]
\end{corollary}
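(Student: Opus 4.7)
The plan is to apply \cref{visits_open} to the open set $\sea \coloneqq \states \setminus \recstates$ of transient states, and then to translate the resulting expression into a sum over forests rooted at $\recstates$ by a weight-preserving bijection.

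The first step is to decompose the absorption probability by conditioning on the entry point of the chain into $\recstates$ and on the last visit to a transient state before this entry:
\[
\absor(\state, \reclass) = \sum_{\stateII \in \reclass} \sum_{\vertex \in \sea} \visits{\sea}{\state}{\vertex} \, \transition_{\vertex \stateII} .
\]
Indeed, since every state in $\sea$ is transient, the chain started at $\state \in \sea$ exits $\sea$ at some almost surely finite time $\rvarII_{\sea}$, and a short computation using the Markov property shows that $\Prob(\rvar_{\rvarII_{\sea} - 1} = \vertex, \rvar_{\rvarII_{\sea}} = \stateII \mid \rvar_{0} = \state) = \visits{\sea}{\state}{\vertex}\transition_{\vertex\stateII}$; summing over $\vertex \in \sea$ and $\stateII \in \reclass$ gives the displayed formula. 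Substituting \cref{visits_open} with $\sources = \recstates$ produces the common denominator $\sum_{\subedges \in \forests(\recstates)}\weight(\subedges)$ in every term.

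The heart of the argument is the combinatorial identity
\[
\sum_{\vertex \in \sea}\transition_{\vertex \stateII} \sum_{\subedgesII \in \forests_{\state\vertex}(\recstates \cup \{\vertex\})}\weight(\subedgesII) = \sum_{\subedges \in \forests_{\state\stateII}(\recstates)}\weight(\subedges) ,
\]
which I would establish via the map $(\vertex, \subedgesII) \mapsto \subedges \coloneqq \subedgesII \cup \{(\vertex, \stateII)\}$. In one direction, given $\subedges \in \forests_{\state\stateII}(\recstates)$, the unique directed path from $\state$ to $\stateII$ in $\dgraph(\subedges)$ ends with some edge $(\vertex, \stateII)$, and the predecessor $\vertex$ must lie in $\sea$ because every element of $\recstates$ is a root of $\dgraph(\subedges)$ and hence has no outgoing edge. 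Removing $(\vertex, \stateII)$ promotes $\vertex$ to a root and yields an element of $\forests_{\state\vertex}(\recstates \cup \{\vertex\})$. In the other direction, appending $(\vertex, \stateII)$ to any $\subedgesII \in \forests_{\state\vertex}(\recstates \cup \{\vertex\})$ creates no directed cycle, because $\stateII \in \recstates$ has no outgoing edge in $\subedgesII$, and the root set changes from $\recstates \cup \{\vertex\}$ to $\recstates$ while the path from $\state$ to $\vertex$ is extended into a path from $\state$ to $\stateII$.

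The main obstacle, though mild, is to verify this bijection cleanly, in particular that the predecessor $\vertex$ is always transient and that the degenerate case $\vertex = \state$ is covered by the convention $\forests_{\state\state}(\recstates \cup \{\state\}) = \forests(\recstates \cup \{\state\})$. Once the identity is established, substituting it into the decomposition of $\absor(\state, \reclass)$ and summing over $\stateII \in \reclass$ yields the formula stated in the corollary.
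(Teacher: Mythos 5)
Your proposal is correct and follows essentially the same route as the paper: both decompose $\absor(\state,\reclass)$ via the exit distribution $\sum_{\vertex\in\sea}\visits{\sea}{\state}{\vertex}\transition_{\vertex\stateII}$ (the paper cites \cite[Theorem~3.5.4]{kemeny_snell} for this), apply \cref{visits_open}, and then collapse the resulting sum by appending the edge $(\vertex,\stateII)$ to each forest. The only difference is that you spell out the weight-preserving bijection behind the final combinatorial identity, which the paper leaves implicit in a one-line manipulation.
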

\begin{proof}
Let $\sea \coloneqq \states \setminus \recstates$ denote the set of transient states and fix $\stateII \in \reclass$. Then, \cite[Theorem~3.5.4]{kemeny_snell} implies that the probability that the chain starting at $\state \in \sea$ goes to $\stateII$ when it leaves $\sea$ is equal to
\[
\sum_{\stateIII \in \sea}\transition_{\stateIII \stateII}\visits{\sea}{\state}{\stateIII} \, .
\]
Moreover, by \cref{visits_open} we have
\begin{align*}
\sum_{\stateIII \in \sea}\transition_{\stateIII \stateII}\visits{\sea}{\state}{\stateIII} &= \sum_{\stateIII \in \sea}\transition_{\stateIII \stateII}\Bigl( \sum_{\subedges \in \forests_{\state \stateIII}(\recstates \cup \{\stateIII\})}\weight(\subedges) \Bigr) \Bigl( \sum_{\subedges \in \forests(\recstates)} \weight(\subedges) \Bigr)^{-1} \\
&= \Bigl( \sum_{\stateIII \in \sea} \, \sum_{\subedges \in \forests_{\state \stateIII}(\recstates \cup \{\stateIII\})}\transition_{\stateIII \stateII}\weight(\subedges) \Bigr) \Bigl( \sum_{\subedges \in \forests(\recstates)} \weight(\subedges) \Bigr)^{-1} \\
&= \Bigl( \sum_{\subedges \in \forests_{\state \stateII}(\recstates)}\weight(\subedges) \Bigr) \Bigl( \sum_{\subedges \in \forests(\recstates)} \weight(\subedges) \Bigr)^{-1} \, .
\end{align*}
We obtain the claimed result by summing over $\stateII \in \reclass$.
\end{proof}

\section{Proofs of the main theorems}

In this section, we give the proofs of our main theorems. \Cref{sec:main} contains the proofs of \cref{thm:stationary,thm:absorption} and \cref{sec:bias} contains the proof of \cref{thm:bias}. 

\subsection{Stationary distributions and absorption probabilities}\label{sec:main}

The proof of our main theorem for irreducible chains relies on the following observation.

\begin{lemma}\label{le:tree_denom}
Let $Q \in \{\denprod,\comden^{n-1}\}$ and suppose that $\subedges \subset \edges$ is such that $\dgraph(\subedges)$ is a rooted tree. Then, $\weight(\subedges)Q$ is a natural number.
\end{lemma}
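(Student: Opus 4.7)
The plan is to exploit the simple combinatorial fact that a rooted tree on the vertex set $[n]$ has exactly $n-1$ edges, and more importantly, that each non-root vertex has exactly one outgoing edge while the root has none. This means the edge set $\subedges$ can be indexed by the non-root vertices: for each $i \in [n]$ that is not the root, there is a unique $j_i \in [n]$ with $(i, j_i) \in \subedges$, and
\[
\weight(\subedges) = \prod_{i \neq \mathrm{root}} \transition_{i j_i} \, .
\]
So the proof boils down to showing that multiplying this product by $Q$ clears all the denominators. This is the only real content; the rest is bookkeeping.

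I would then handle the two choices of $Q$ separately. For $Q = \denprod = \comden_1 \cdots \comden_n$, the key point is that at most one factor of the product involves row $i$ (since $i$ has at most one outgoing edge in a rooted forest). By the definition of $\comden_i$, the product $\transition_{ij_i}\comden_i$ is a natural number for every non-root $i$. Multiplying the rooted-tree weight by $\prod_{i \neq \mathrm{root}}\comden_i$ already gives a natural number, and the remaining factor $\comden_{\mathrm{root}}$ is itself a positive integer, so $\weight(\subedges)\denprod \in \N$.

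For $Q = \comden^{n-1}$, I would use that $\comden$ is a common denominator for \emph{every} entry of $\transition$, so $\transition_{ij_i}\comden \in \N$ for each of the $n-1$ edges of the tree. Distributing the $n-1$ factors of $\comden$ one per edge gives $\weight(\subedges)\comden^{n-1} \in \N$.

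There is no real obstacle here; the lemma is essentially the observation that a rooted tree is in bijection with a choice of one outgoing edge per non-root vertex, so the weight has at most one factor from each row (giving the $\denprod$ bound) and exactly $n-1$ factors overall (giving the $\comden^{n-1}$ bound). The only thing to be a little careful about is making sure the forest/tree conventions match: since \cref{def:forest} forbids loops in rooted forests, each edge $(i, j_i)$ genuinely satisfies $i \neq j_i$, but this plays no role in the denominator argument and could be ignored.
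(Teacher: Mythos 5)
Your proposal is correct and follows essentially the same route as the paper: the $\comden^{n-1}$ bound comes from the tree having exactly $n-1$ edges, and the $\denprod$ bound comes from each row of $\transition$ contributing at most one factor to $\weight(\subedges)$ because every vertex has at most one outgoing edge. The extra remarks about indexing edges by non-root vertices and about loops are harmless bookkeeping that the paper leaves implicit.
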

\begin{proof}
Since $\dgraph(\subedges)$ has exactly $n-1$ edges, $\weight(\subedges)$ is a product of $n-1$ rational numbers with common denominator $\comden$. Hence, $\weight(\subedges)\comden^{n-1}$ is a natural number. Moreover, since every vertex of $\dgraph(\subedges)$ has at most one outgoing edge, the product in $\weight(\subedges)$ involves at most one number taken from the first row of $\transition$, at most one number taken from the second row of $\transition$, at most one number taken from the third row of $\transition$ and so on. Therefore, $\weight(\subedges)\denprod$ is also a natural number.
\end{proof}

\begin{proof}[Proof of \cref{thm:stationary}]
Let $Q \in \{\denprod,\comden^{n-1}\}$. 
By \cref{st_dist_formula}, for every $\stateII \in \states$ we have
\[
\stdist_{\stateII} = \frac{Q\sum_{\subedges \in \forests(\stateII)}\weight(\subedges)}{Q\sum_{\state \in \states}\sum_{\subedges \in \forests(\state)}\weight(\subedges)} \, .
\]
By \cref{le:tree_denom}, the numerator and the denominator of the above fraction are natural numbers. In particular,
\[
Q\sum_{\state \in \states}\sum_{\subedges \in \forests(\state)}\weight(\subedges)
\]
is a common denominator $(\stdist_{\vertex})_{\vertex}$. Furthermore, note that for every $\state \in \states$ we have
\begin{equation}\label{eq:total_weight_tree}
\sum_{\subedges \in \forests(\state)}\weight(\subedges) \le \prod_{\stateII \neq \state}(\sum_{\stateIII} \transition_{\stateII \stateIII}) = 1 \, .
\end{equation}
Indeed, $\prod_{\stateII \neq \state}(\sum_{\stateIII} \transition_{\stateII \stateIII})$ is the total weight of all graphs $\dgraph(\subedges)$ in which $\state$ has no outgoing edges and every other vertex has exactly one outgoing edge. Since every tree rooted at $\state$ has these properties, we get \cref{eq:total_weight_tree}. Therefore, the lowest common denominator of $(\stdist_{\vertex})_{\vertex}$ is not greater than
\[
Q\sum_{\state \in \states}\sum_{\subedges \in \forests(\state)}\weight(\subedges) \le nQ \, . \qedhere
\]
\end{proof}

The next proposition shows that the bound of \cref{thm:stationary} is optimal for any fixed $n$. More precisely, let $g(n, \comden)$ denote the optimal bound that could be obtained in \cref{thm:stationary} for any fixed $n,\comden$, under the additional assumption that $\denprod = \comden^{n-1}$. Then, we have the following result.

\begin{figure}[t]
\begin{center}
\centering
\begin{tikzpicture}[scale=0.9,>=stealth',row/.style={draw,circle,minimum size=1.2cm},col/.style={draw,rectangle,minimum size=0.5cm},av/.style={draw, circle,fill, inner sep = 0pt,minimum size = 0.2cm}]

\node[row] (i1) at (1, 4) {$1$};
\node[row] (i2) at (3.5,4) {$2$};
\node[row] (i3) at (8,4) {$n-1$};
\node[row] (i4) at (11,4) {$n$};

\coordinate (l1) at (5.2,4);
\coordinate (l2) at (6.2,4);

\draw[->] (i1) to node[above]{$\frac{p_1}{\comden}$} (i2);
\draw[->] (i3) to node[above]{$\frac{p_{n-1}}{\comden}$} (i4);

\draw[->] (i2) to node[above]{$\frac{p_2}{\comden}$} (l1);
\draw[->] (l2) to node[above]{$\frac{p_{n-2}}{\comden}$} (i3);

\draw[->] (i4) to[out = 130, in = 50] node[above]{$1$} (i1);

\path (l1) -- node[auto=false]{\ldots} (l2);

\draw[->] (i1) to[in = -60, out = -120,looseness=7] node[below]{$1 - \frac{p_1}{\comden}$} (i1);
\draw[->] (i2) to[in = -60, out = -120,looseness=7] node[below]{$1 - \frac{p_2}{\comden}$} (i2);
\draw[->] (i3) to[in = -60, out = -120,looseness=7] node[below]{$1 - \frac{p_{n-1}}{\comden}$} (i3);

\end{tikzpicture}
\end{center}
\caption{An irreducible Markov chain from \cref{pr:st_optimality}.}\label{fig:irreducible}
\end{figure}
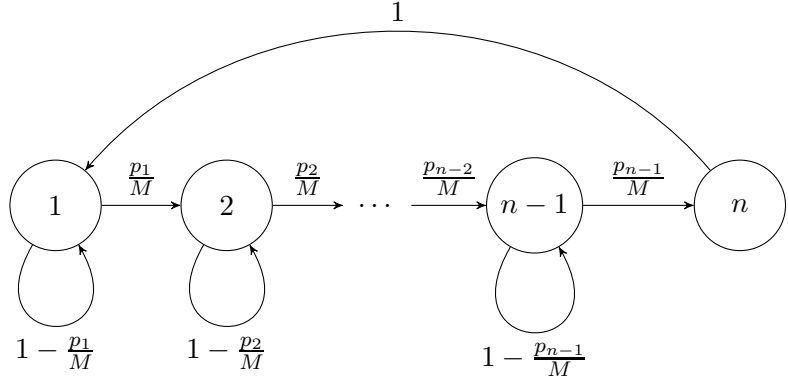

\begin{proposition}\label{pr:st_optimality}
For every $n \ge 2$ we have 
\[
\limsup_{\comden \to +\infty} \, \frac{g(n,\comden)}{n\comden^{n-1}} = 1 \, .
\]
\end{proposition}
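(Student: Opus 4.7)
The upper bound $g(n,\comden) \le n\comden^{n-1}$ is immediate from \cref{thm:stationary}, so my plan is to construct, for a suitable sequence of values of $\comden$ tending to infinity, an irreducible stochastic matrix of the form shown in \cref{fig:irreducible} whose stationary distribution has lowest common denominator asymptotic to $n\comden^{n-1}$. First I would enumerate the rooted trees in the underlying directed graph: since each vertex $i < n$ has a unique outgoing non-loop edge $(i,i+1)$ and vertex $n$ has the unique outgoing edge $(n,1)$, for every $v \in [n]$ there is exactly one rooted tree with root $v$. Writing $P \coloneqq \prod_{i=1}^{n-1} p_i$, for $v = n$ this tree is the path $1 \to 2 \to \dots \to n$ of weight $P/\comden^{n-1}$, while for $v = k < n$ it consists of the edges $(i,i+1)$ for $i \neq k$ together with $(n,1)$, of weight $(P/p_k)/\comden^{n-2}$. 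The Markov chain tree formula (\cref{st_dist_formula}) then yields $\stdist_n = P/D$ and $\stdist_k = \comden(P/p_k)/D$ for $k < n$, where
\[
D \coloneqq P + \comden \sum_{k=1}^{n-1} P/p_k \, .
\]

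Next I would specialize to the case where $\comden$ is a large prime and $p_1,\dots,p_{n-1}$ are $n-1$ distinct primes different from $\comden$, which in particular forces $\denprod = \comden^{n-1}$. A short modular computation then shows that no prime in $\{\comden, p_1, \dots, p_{n-1}\}$ divides $D$: when reducing modulo $p_k$, every summand of $D$ except $\comden(P/p_k)$ is divisible by $p_k$, and $\comden(P/p_k)$ is itself a product of primes all distinct from $p_k$; similarly $D \equiv P \pmod{\comden}$. Since the numerators $P$ and $\comden(P/p_k)$ are themselves supported on the prime set $\{\comden, p_1, \dots, p_{n-1}\}$, none of them can share a common factor with $D$, so the lowest common denominator of $(\stdist_v)_v$ equals $D$ exactly.

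It then remains to make $D$ asymptotically equal to $n\comden^{n-1}$. Writing $p_i = \comden - c_i$ and expanding, one gets
\[
D = n\comden^{n-1} - (n-1)\Bigl(\sum_{i=1}^{n-1} c_i\Bigr)\comden^{n-2} + O_n(\comden^{n-3}) \, ,
\]
so it suffices to choose the $p_i$ with $\sum_i c_i = o(\comden)$. This is easily arranged via the prime number theorem: for every sufficiently large prime $\comden$ one can find $n-1$ distinct primes in an interval $[\comden - L(\comden), \comden - 1]$ with $L(\comden) = o(\comden)$, and then $\sum_i c_i \le (n-1)L(\comden) = o(\comden)$. Consequently $D/(n\comden^{n-1}) \to 1$ along this subsequence of primes, which combined with the upper bound yields $\limsup = 1$.

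The main obstacle I anticipate is the coprimality check in the second step: the polynomial expression $D$ could a priori share a hidden common factor with the numerators, which would cancel in the reduction to lowest terms and destroy the lower bound. Restricting the parameters to distinct primes is precisely what makes the modular analysis tractable and ensures that $D$ itself is the genuine lowest common denominator; this is the sole reason for specializing to prime values of $\comden$ and $p_i$ rather than working with generic integers close to $\comden$.
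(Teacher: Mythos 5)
Your proposal is correct and follows essentially the same route as the paper: the same chain from \cref{fig:irreducible}, the same enumeration of its $n$ rooted trees via the Markov chain tree formula (\cref{st_dist_formula}), the same coprimality argument to rule out cancellation so that the lowest common denominator is exactly the rescaled sum of tree weights, and the same asymptotic comparison with $n\comden^{n-1}$. The only genuine divergence is the choice of parameters. You take $\comden$ and $p_1,\dots,p_{n-1}$ to be distinct primes with the $p_i$ in a short interval below $\comden$; this makes the no-cancellation step a clean modular computation, but it imports the prime number theorem (and, as stated, a small diagonalization to upgrade ``$n-1$ primes in $[(1-\varepsilon)\comden,\comden-1]$ for each fixed $\varepsilon$'' to a single function $L(\comden)=o(\comden)$ --- though the fixed-$\varepsilon$ version already suffices for the $\limsup$). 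The paper instead sets $p_i=\nat_q!+\nat_i$ with $\nat_i$ the $i$th prime and $\comden=p_n$: these integers are pairwise coprime by an elementary divisibility argument, and they satisfy $p_i\ge \nat_q!\ge\comden/(1+\delta)$ by construction, so no input from analytic number theory is needed. Both arguments are sound; the paper's is more self-contained, while yours makes the mechanism preventing cancellation slightly more transparent.
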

\begin{proof}
We have $g(n,\comden) \le n\comden^{n-1}$ by \cref{thm:stationary}. To prove the opposite inequality, fix $n \ge 2$ and $\varepsilon > 0$. The following construction shows that we can find an arbitrarily large $\comden \ge 1$ and an irreducible Markov chain on $n$ states such that the lowest common denominator of $(\stdist_{\vertex})_{\vertex}$ is not smaller than $(1-\varepsilon)n\comden^{n-1}$. To do this, let $(\nat_1, \nat_2, \dots) = (2,3,5,\dots)$ be the sequence of prime numbers and let $\delta > 0$ be such that $(\frac{1}{1+\delta})^{n-1} \ge 1 - \varepsilon$. For sufficiently large $q \ge n$ we have $\nat_n \le \delta (\nat_{q}!)$. Take any such $q$ and let $\comden = \nat_q! + \nat_n$, so that $\comden \le (1+ \delta)\nat_q!$. Furthermore, let $p_1 = \nat_q! + \nat_1$, $p_2 = \nat_q! + \nat_2, \dots, p_n = \nat_q! + \nat_n = \comden$. We note that the numbers $(p_1, \dots, p_n)$ are pairwise coprime. Indeed, if a prime number $\nat$ divides $\nat_q! + \nat_i$ and $\nat_q! + \nat_j$, then it also divides $|\nat_i - \nat_j|$. Since $|\nat_i - \nat_j|$ is smaller than $\nat_n \le \nat_q$, the number $\nat$ also divides $\nat_q!$ and so it divides both $\nat_i$ and $\nat_j$, which is a contradiction. Consider the irreducible Markov chain shown in \cref{fig:irreducible} and note that this chain satisfies $\denprod = \comden^{n-1}$. Moreover, in this chain every state $\state \in [n]$ is a root of exactly one rooted tree and the weight of this tree is equal to $\prod_{\stateII \neq \state}\frac{p_{\stateII}}{\comden}$. Hence, if we denote $Q_{\state} = p_1\dots p_n / p_\state$ for all $\state$, then \cref{st_dist_formula} shows that the stationary distribution of this chain is given by 
\begin{equation}\label{ex:irred_example}
\forall \state, \, \stdist_{\state} = \frac{Q_{\state}}{\sum_{\stateII \in \states} Q_{\stateII}} \, .
\end{equation}
We note that the numbers $Q_{\state}$ are natural. Even more, the fraction in \cref{ex:irred_example} is simple. Indeed, if a prime number $\nat$ divides $Q_\state$, then it divides exactly one of $p_1, \dots, p_n$ because these numbers are pairwise coprime. Suppose that $\nat$ divides $p_i$. Then, $\nat$ divides $Q_{\stateII}$ for all $\stateII \neq i$ and it does not divide $Q_{i}$. In particular, $m$ does not divide $\sum_{\stateII \in \states} Q_{\stateII}$, showing that the fraction in \cref{ex:irred_example} cannot be simplified. Therefore, the lowest common denominator of $(\stdist_{\vertex})_{\vertex}$ is equal to $\sum_{\stateII \in \states} Q_{\state}$. Furthermore, we have
\[
\sum_{\stateII \in \states} Q_{\stateII} \ge np_1 \dots p_{n-1} \ge n (\nat_q!)^{n-1} \ge n(\frac{1}{1+\delta})^{n-1}\comden^{n-1} \ge (1- \varepsilon)n\comden^{n-1} \, . \qedhere
\]
\end{proof}

\begin{remark}
The fact that the fractions in \cref{ex:irred_example} are simple implies that the bound of \cref{thm:stationary} remains optimal even if one only wants to bound the denominators of $\stdist_\state$ separately.
\end{remark}

The theorem for absorption probabilities follows by a similar argument. The following lemma is analogous to \cref{le:tree_denom}.

\begin{lemma}\label{le:forest_denom}
Let $Q \in \{\denprod_T, \comden^{n-2}\}$ and let $\recstates \subset \states$ denote the set of recurrent states of $\mchain$. Suppose that $\card{\recstates} \ge 2$ and that $\subedges \subset \edges$ is such that $\dgraph(\subedges)$ is a forrest rooted at $\recstates$. Then, $\weight(\subedges)Q$ is a natural number.
\end{lemma}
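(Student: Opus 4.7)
The plan is to mimic the proof of Lemma~\ref{le:tree_denom}, adapting the two counting arguments (by total edges and by out-edges per vertex) to the setting of a forest whose set of roots is exactly $\recstates$.

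First I would observe the key structural fact about $\dgraph(\subedges)$: by Definition~\ref{def:forest}, a vertex is a root iff it has no outgoing edge, and every non-root vertex has exactly one outgoing edge. Since the set of roots is $\recstates$, the out-edges of $\dgraph(\subedges)$ are in bijection with the transient states $T = \states \setminus \recstates$, one per element of $T$. In particular, $\card{\subedges} = \card{T} = n - \card{\recstates}$, and the hypothesis $\card{\recstates} \ge 2$ gives $\card{\subedges} \le n - 2$.

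For the case $Q = \comden^{n-2}$, each of the $\card{\subedges}$ transition probabilities appearing in $\weight(\subedges)$ is a rational with denominator dividing $\comden$, so $\weight(\subedges)\comden^{\card{\subedges}} \in \N$. Since $\card{\subedges} \le n-2$, multiplying by the extra factor $\comden^{n-2-\card{\subedges}} \in \N$ shows $\weight(\subedges)\comden^{n-2} \in \N$.

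For the case $Q = \denprod_T = \prod_{i \in T}\comden_i$, I would use the bijection above: each edge of $\subedges$ is outgoing from a distinct vertex $i \in T$, so the product $\weight(\subedges) = \prod_{(\state,\stateII) \in \subedges}\transition_{\state\stateII}$ involves at most one entry from each row indexed by $T$ (and no entry from any row indexed by $\recstates$). Each such row-$i$ entry has denominator dividing $\comden_i$, hence $\weight(\subedges)\prod_{i \in T}\comden_i \in \N$, which is exactly $\weight(\subedges)\denprod_T \in \N$. There is no real obstacle; the only subtlety to highlight is that the bound $n-2$ on the number of edges is what forces the assumption $\card{\recstates} \ge 2$, and that for the $\denprod_T$ estimate it is crucial that the roots are precisely the recurrent states, so that rows indexed by $\recstates$ contribute no factor to $\weight(\subedges)$.
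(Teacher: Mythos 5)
Your proof is correct and follows essentially the same two-part argument as the paper: bounding the number of edges by $n-2$ using $\card{\recstates}\ge 2$ for the $\comden^{n-2}$ case, and noting that each edge is the unique out-edge of a distinct transient state for the $\denprod_T$ case. No issues.
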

\begin{proof}
Since $\card{\recstates} \ge 2$, the graph $\dgraph(\subedges)$ has at least two roots, and so $\weight(\subedges)$ is a product of at most $n-2$ rational numbers with common denominator $\comden$. Thus, $\weight(\subedges)\comden^{n-2}$ is a natural number. Furthermore, $\weight(\subedges)$ is a product obtained by taking one number from each row of $\transition$ that corresponds to a transient state of $\mchain$. Therefore, $\weight(\subedges)\denprod_T$ is also a natural number.
\end{proof}

\begin{proof}[Proof of \cref{thm:absorption}]
Let $\recstates \subset \states$ denote the set of recurrent states of $\mchain$. If $\mchain$ has exactly one recurrent state $\stateII \in \states$, then $\absor(\state, \{\stateII\}) = 1$ for all $\state \neq \stateII$ and the claim is trivial. From now on we suppose that $\card{\recstates} \ge 2$. Let $Q \in \{\denprod_T, \comden^{n-2}\}$. Then, \cref{absorption_formula} shows that for every $\state \in \states$ we have
\[
\absor(\state, \reclass) = \frac{Q\sum_{\stateII \in \reclass} \sum_{\subedges \in \forests_{\state \stateII}(\recstates)}\weight(\subedges)}{Q\sum_{\subedges \in \forests(\recstates)} \weight(\subedges)} \, .
\]
By \cref{le:forest_denom}, both the numerator and the denominator of the fraction above are natural numbers. Furthermore, we have
\[
\sum_{\subedges \in \forests(\recstates)}\weight(\subedges) \le \prod_{\stateII \notin \recstates}(\sum_{\stateIII} \transition_{\stateII \stateIII}) = 1 \, .
\]
Hence, the lowest common denominator of $\bigl(\absor(\state, \reclass)\bigr)_{\state \in \states, \reclass \in \mathcal{C}}$ is not greater than
\[
Q\sum_{\subedges \in \forests(\recstates)} \weight(\subedges) \le Q \, . \qedhere
\]
\end{proof}

The bound of \cref{thm:absorption} is attained for every value of $n, \comden$ as shown by the next example.

\begin{figure}[t]
\begin{center}
\centering
\begin{tikzpicture}[scale=0.9,>=stealth',row/.style={draw,circle,minimum size=1.2cm},col/.style={draw,rectangle,minimum size=0.5cm},av/.style={draw, circle,fill, inner sep = 0pt,minimum size = 0.2cm}]

\node[row] (i1) at (1, 4) {$1$};
\node[row] (i2) at (3.5,4) {$2$};
\node[row] (i3) at (8,4) {$n-2$};
\node[row] (i4) at (11,4) {$n$};

\node[row] (i5) at (6,7) {$n-1$};

\coordinate (l1) at (5.2,4);
\coordinate (l2) at (6.2,4);

\draw[->] (i1) to node[above]{$\frac{1}{\comden}$} (i2);
\draw[->] (i3) to node[above]{$\frac{1}{\comden}$} (i4);

\draw[->] (i2) to node[above]{$\frac{1}{\comden}$} (l1);
\draw[->] (l2) to node[above]{$\frac{1}{\comden}$} (i3);

\draw[->] (i3) to node[above right=-1ex]{$1- \frac{1}{\comden}$} (i5);
\draw[->] (i2) to node[above left=-1ex]{$1- \frac{1}{\comden}$} (i5);
\draw[->] (i1) to[out=60, in=170] node[above left]{$1- \frac{1}{\comden}$} (i5);

\path (l1) -- node[auto=false]{\ldots} (l2);
\draw[->] (i4) to[in = 60, out = 120,looseness=5] node[above]{$1$} (i4);
\draw[->] (i5) to[in = 20, out = -20,looseness=7] node[right]{$1$} (i5);

\end{tikzpicture}
\end{center}
\caption{A Markov chain from \cref{ex:absorption}.}\label{fig:absorption}
\end{figure}

\begin{example}\label{ex:absorption}
Consider the Markov chain depicted in \cref{fig:absorption}. It is clear that we have $\denprod_T = \comden^{n-2}$ and $\absor(1,n) = 1/\comden^{n-2}$, showing that the bound of \cref{thm:absorption} is attained.
\end{example}

We now present the proofs of \cref{cor:value_ergodic,cor:value_general}.

\begin{proof}[Proof of \cref{cor:value_ergodic}]
Let $\state \in \states$ be a recurrent state of $\mchain$ belonging to some recurrent class $\reclass \subset \states$. Then, \cref{eq:value} shows that $\eta = \sum_{\stateII \in \reclass}\payoff_{\stateII}\stdist_{\stateII}$, where $\stdist \in \R^{\reclass}$ is the stationary distribution of $\reclass$. Hence, by applying \cref{thm:stationary} to $\reclass$ we get that $\eta$ is a rational number and that its denominator is not greater than $\min\{n\denprod,n\comden^{n-1}\}$.
\end{proof}

\begin{proof}[Proof of \cref{cor:value_general}]
Let $\reclass_1, \dots, \reclass_p$ denote the recurrent classes of $\mchain$. Furthermore, for every $\ell \in [p]$ let $\stdist^{(\ell)} \in \R^{\reclass_{\ell}}$ be the stationary distribution on $\reclass_{\ell}$ and let $\denprod_{\ell} \in \N$ be defined as $\denprod_{\ell} \coloneqq \prod_{\state \in \reclass_{\ell}}\comden_{\state}$. By combining \cref{thm:stationary,thm:absorption} we get that the lowest common denominator of the numbers $\Bigl( \bigl(\absor(\state, \reclass_{\ell})\bigr)_{\state \in \states, \ell \in [p]}, \bigl(\stdist^{(\ell)}_{\state}\bigr)_{\ell \in [p], \state \in \reclass_{\ell}}\Bigr)$ is not greater than
\[
\card{\reclass_1}\dots\card{\reclass_p}\denprod_1\dots\denprod_{p}\denprod_{T} = \card{\reclass_1}\dots\card{\reclass_p}\denprod \, .
\]
Therefore, \cref{eq:value} implies that the numbers $(\gameval_\state)_{\state \in \states}$ are rational and that their lowest common denominator is not greater than $\card{\reclass_1}\dots\card{\reclass_p}\denprod$. By the inequality of arithmetic and geometric means we get $\card{\reclass_1}\dots\card{\reclass_p} \le (s/p)^p$. Furthermore, the function $f \colon \R_{>0} \to \R$ defined as $f(p) \coloneqq p\ln(s) - p\ln(p)$ achieves its maximum when $\ln(p) = \ln(s) - 1$, i.e., $p = s/e$. Therefore, we get $(s/p)^p \le e^{s/e}$. Since $e^{1/e} < 1.5 < \sqrt{3}$, we obtain $\card{\reclass_1}\dots\card{\reclass_p} \le 3^{s/2}$, which finishes the proof.
\end{proof}

\subsection{Estimating a bias vector}\label{sec:bias}

We now give our estimates concerning bias vectors. Our proof of \cref{thm:bias} is based on the following lemma.

\begin{lemma}\label{le:estimate_visits}
Suppose that $\sea \subset \states$ is open and let $\hat{\transition}^{\sea \times \sea}$ denote the submatrix of $\transition$ formed by the rows and columns from $\sea$. Then, we have $\linf{(\Id - \hat{\transition})^{-1}} \le \prod_{\stateII \in \sea}\comden_{\stateII} \le \min\{\denprod, \comden^{n-1}\}$.
\end{lemma}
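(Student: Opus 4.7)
The plan is to apply Catoni's tree formula from \cref{visits_open} to each entry of $(\Id - \hat{\transition})^{-1}$ and then reuse the denominator/numerator bookkeeping from the proof of \cref{thm:stationary}. By \cref{le:inverse_transient}, $(\Id - \hat{\transition})^{-1}_{\state\stateII} = \visits{\sea}{\state}{\stateII}$, and setting $\sources \coloneqq \states \setminus \sea$, \cref{visits_open} gives
\[
\visits{\sea}{\state}{\stateII} = \frac{\sum_{\subedges \in \forests_{\state \stateII}(\sources \cup \{\stateII\})}\weight(\subedges)}{\sum_{\subedges \in \forests(\sources)}\weight(\subedges)}.
\]
Writing $Q \coloneqq \prod_{\stateII \in \sea}\comden_{\stateII}$, it suffices to show that this ratio is at most $Q$ for every $\state,\stateII \in \sea$.

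For the denominator, any $\subedges \in \forests(\sources)$ has exactly one outgoing edge from each vertex of $\sea$, so the argument of \cref{le:tree_denom} gives that $Q \cdot \weight(\subedges)$ is a positive integer. Openness of $\sea$ implies that $\forests(\sources)$ is nonempty: from every $\state \in \sea$ there is a directed path in the transition graph to $\sources$, and assigning to each $\stateII \in \sea$ an outgoing edge that strictly decreases its graph distance to $\sources$ yields a rooted forest with root set $\sources$. Consequently the denominator, multiplied by $Q$, is a positive integer and hence $\ge 1$. For the numerator, each $\subedges \in \forests_{\state\stateII}(\sources \cup \{\stateII\})$ has one outgoing edge from each vertex of $\sea \setminus \{\stateII\}$; dropping both the path constraint and the acyclicity constraint gives
\[
\sum_{\subedges \in \forests_{\state \stateII}(\sources \cup \{\stateII\})}\weight(\subedges) \le \prod_{\stateIII \in \sea \setminus \{\stateII\}}\sum_{j \in \states}\transition_{\stateIII j} = 1,
\]
since $\transition$ is row-stochastic. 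Combining the two estimates yields $\visits{\sea}{\state}{\stateII} \le Q$ for every pair $\state,\stateII \in \sea$, and hence $\linf{(\Id - \hat{\transition})^{-1}} \le Q$.

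The final inequality $Q \le \min\{\denprod,\comden^{n-1}\}$ is immediate from $\comden_{\stateII} \ge 1$ (giving $Q \le \denprod$) together with $|\sea| \le n-1$, the latter because openness forces $\sea \subsetneq \states$. The only mildly delicate point in the plan is the nonemptiness of $\forests(\sources)$, which is precisely where the openness hypothesis enters; the remainder is the same forest-counting template developed in \cref{sec:main}.
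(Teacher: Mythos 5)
Your proof is correct, but it takes a genuinely different route from the paper. You bound each entry $\visits{\sea}{\state}{\stateII}$ by invoking Catoni's combinatorial formula (\cref{visits_open}) a second time and then reusing the forest-counting template: the numerator is at most $1$ by row-stochasticity (the analogue of \cref{eq:total_weight_tree}), and the denominator is at least $1/\prod_{\stateII\in\sea}\comden_{\stateII}$ because, after clearing denominators as in \cref{le:tree_denom}, it is a positive integer over a nonempty index set. The paper instead argues probabilistically: writing $Z$ for the number of visits to $\stateII$ before leaving $\sea$ and $q$ for the probability of returning to $\stateII$ before escape, it uses $\E(Z\mid X_0=\state)=\sum_{t\ge 1}\Prob(Z\ge t\mid X_0=\state)\le\sum_{t\ge1}q^{t-1}=1/(1-q)$, and bounds $1-q\ge 1/\prod_{\stateII\in\sea}\comden_{\stateII}$ via the probability of following a single simple escape path. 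Both arguments yield exactly the same constant. Your version buys uniformity with the proofs of \cref{thm:stationary,thm:absorption} and makes the rational structure of $(\Id-\hat{\transition})^{-1}$ explicit; the paper's version is shorter and needs only \cref{le:inverse_transient}, avoiding the one delicate point in your argument, namely the nonemptiness of $\forests(\sources)$ --- which you do handle correctly, since choosing for each vertex of $\sea$ an outgoing edge that strictly decreases the distance to $\sources$ rules out directed cycles and leaves exactly the vertices of $\sources$ rootless.
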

\begin{proof}
Let $\denprod_\sea \coloneqq \prod_{\stateII \in \sea}\comden_{\stateII}$. We have $\denprod_\sea \le \min\{\denprod, \comden^{n-1}\}$ because $\sea$ has at most $n-1$ states. By \cref{le:inverse_transient}, we have $(\Id - \hat{\transition})^{-1}_{\state\stateII} = \visits{\sea}{\state}{\stateII}$ for all $\state, \stateII \in \sea$. Fix $\state, \stateII$ and let $\rvarII_{\sea} = \inf\{\ell \ge 1 \colon \rvar_{\ell} \notin \sea \}$ denote the moment when the Markov chain leaves $\sea$ for the first time. Moreover, let $Z \coloneqq \sum_{\ell = 0}^{\rvarII_{\sea} - 1}\ind_{\{X_{\ell} = \stateII\}}$ denote the number of times the Markov chain visits $\stateII$ before leaving $\sea$. Under this notation, we have $\visits{\sea}{\state}{\stateII} = \E(Z | X_0 = \state)$. Furthermore, let $q \in \interval{0}{1}$ denote the probability that the Markov chain starting at $\stateII$ goes back to $\stateII$ before leaving $\sea$, i.e., $q \coloneqq \Prob(Z \ge 2 | X_0 = \stateII)$. Since $\sea$ is open, there exists a simple path in $\dgraph$ that starts in $\stateII$ and ends in some state that is outside $\sea$. The probability that the Markov chain starting from $\stateII$ follows this path is not smaller than $1/\denprod_W$. Therefore, $q \le 1 - \denprod_W^{-1}$. Furthermore, note that for all $t \ge 1$ we have $\Prob(Z \ge t | X_0 = \state) \le q^{t-1}$, because in order to achieve $Z \ge t$ the chain starting from $\state$ has to reach $\stateII$ and subsequently go back to $\stateII$ at least $t - 1$ times. Thus,
\[
\visits{\sea}{\state}{\stateII} = \E(Z | X_0 = \state) = \sum_{t = 1}^{\infty}\Prob(Z \ge t | X_0 = \state) \le \sum_{t = 1}^{\infty}q^{t-1} = \frac{1}{1 - q} \le \denprod_W. \qedhere
\]
\end{proof}

To prove \cref{thm:bias}, we start with the irreducible case and then move to the general case.

\begin{lemma}\label{le:bias_irred}
\Cref{thm:bias} is true when $\transition$ is irreducible.
\end{lemma}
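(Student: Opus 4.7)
The plan is to handle case~(i) directly, then derive case~(ii) from~(i) by translation.

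For case~(i), I rewrite the second equation of~\cref{eq:bias} as $(I - \transition)\bias = \payoff - \eta\mathbf{1}$, using that $\gameval = \eta\mathbf{1}$ when $\transition$ is irreducible. Set $\sea \coloneqq \states \setminus \{i_1\}$, which is open in the sense of~\cref{def:open} by irreducibility. Using $\bias_{i_1} = 0$, the row indexed by any $i \in \sea$ reads $\bias_i - \sum_{j \in \sea}\transition_{ij}\bias_j = \payoff_i - \eta$, so the restriction of $\bias$ to $\sea$ equals $(I - \hat{\transition}^{\sea\times\sea})^{-1}$ applied to the restriction of $\payoff - \eta\mathbf{1}$ to $\sea$. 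By~\cref{le:inverse_transient}, the entries of $(I - \hat{\transition}^{\sea\times\sea})^{-1}$ are expected visit counts and are therefore nonnegative; by~\cref{le:estimate_visits} each such entry is bounded by $\min\{\denprod,\comden^{n-1}\}$; and each row has at most $n-1$ entries. Moreover, $\eta = \sum_j\payoff_j\stdist_j$ is a convex combination of the $\payoff_j$'s, hence $|\payoff_j - \eta| \le 2\linf{\payoff}$. Combining these bounds gives $|\bias_i| \le 2(n-1)\linf{\payoff}\min\{\denprod,\comden^{n-1}\} \le 2n\linf{\payoff}\min\{\denprod,\comden^{n-1}\}$ for $i \in \sea$, and this bound holds trivially at $i = i_1$.

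For case~(ii), I use that in the irreducible case any two bias vectors differ by a scalar multiple of $\mathbf{1}$: the difference of two solutions of~\cref{eq:bias} lies in $\ker(I - \transition)$, which is one-dimensional and spanned by $\mathbf{1}$ by Perron--Frobenius. Given a bias $\bias$ satisfying $\sum_i\bias_i\stdist_i = 0$, pick any $i_1 \in \states$ and set $\bias' \coloneqq \bias - \bias_{i_1}\mathbf{1}$. Then $\bias'$ is still a bias and satisfies $\bias'_{i_1} = 0$, so case~(i) yields $\linf{\bias'} \le 2n\linf{\payoff}\min\{\denprod,\comden^{n-1}\}$. The normalization condition pins down $\bias_{i_1} = -\sum_i\bias'_i\stdist_i$, whose absolute value is at most $\linf{\bias'}$ since $\stdist$ is a probability distribution. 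Therefore $\linf{\bias} \le \linf{\bias'} + |\bias_{i_1}| \le 2\linf{\bias'} \le 4n\linf{\payoff}\min\{\denprod,\comden^{n-1}\}$.

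I do not anticipate a serious obstacle. The main thing to verify carefully is that the nonnegativity and entry-wise bound on $(I - \hat{\transition})^{-1}$ from the earlier lemmas combine cleanly with $|\payoff_j - \eta| \le 2\linf{\payoff}$ to give the case-(i) bound, and that the translation argument for case~(ii) preserves the bias property and inherits the case-(i) bound with only a factor-of-two loss.
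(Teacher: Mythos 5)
Your proof is correct and follows essentially the same route as the paper: delete the row and column of the vanishing coordinate, invert $\Id - \hat{\transition}$, and combine \cref{le:inverse_transient,le:estimate_visits} with $|\payoff_j - \eta| \le 2\linf{\payoff}$ to get case~(i), then reduce case~(ii) to case~(i) via the one-dimensional kernel of $\Id - \transition$. The only (cosmetic) difference is in case~(ii), where the paper passes through the translation-invariant Hilbert seminorm while you compute the shift constant $\bias_{i_1} = -\sum_i \bias'_i \stdist_i$ directly; both yield the same factor-of-two loss and the same final bound.
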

\begin{proof}
We start by proving the case $i)$. Let $\bias \in \R^{\states}$ be a bias vector such that $u_n = 0$ (the proof if analogous if $\bias_\state = 0$ for some other $\state \in \states$). Denote $\sea \coloneqq \states \setminus \{n\}$ and let $\hat{\transition} \in \R^{\sea \times \sea}$ be the matrix obtained from $\transition$ by deleting the last row and column. Likewise, let $\hat{\bias},\hat{\payoff} \in \R^{\sea}$ be the vectors obtained from $\bias,\payoff$ by deleting their last coordinates. Let $\stdist \in \interval[open left]{0}{1}^{\states}$ be the stationary distribution of $\transition$ and denote $\eta = \sum_{\state \in \states} \stdist_{\state}\payoff_{\state} \in \R$, so that $\gameval = \eta(1,1,\dots,1)$. We note that $|\eta| \le \linf{\payoff}$. The definition of the bias vector and the fact that $\bias_{n} = 0$ imply the equality $\hat{\transition}\hat{\bias} = \eta + \hat{\bias} - \hat{\payoff}$. Since $\sea$ is an open set, \cref{le:inverse_transient} gives $\hat{\bias} = (\Id - \hat{\transition})^{-1}(-\eta + \hat{r})$. Hence, \cref{le:estimate_visits} shows that 
\[
\linf{\hat{\bias}} \le 2\linf{\payoff}(n-1)\min\{\denprod, \comden^{n-1}\} \le2\linf{\payoff}n\min\{\denprod, \comden^{n-1}\} \, .
\]
To prove the second case, note that the kernel of the matrix $(\Id - \transition)$ is equal to $\{\lambda(1,1,\dots,1) \colon \lambda \in \R\}$. Therefore, the set of bias vectors of $(\payoff, \transition)$ is given by $\{\lambda + \overbar{\bias} \colon \lambda \in \R\}$, where $\overbar{\bias}$ is a bias such that $\overbar{\bias}_n = 0$. Thus, all bias vectors have the same Hilbert seminorm and the previous case gives
\[
\| \bias \|_{H} = \| \overbar{\bias}\|_{H} \le 2\linf{\overbar{\bias}} \le 4\linf{\payoff}n\min\{\denprod, \comden^{n-1}\} \, .
\]
Since $\stdist^{T}\bias = 0$, for all $\state \in \states$ we get
\[
|\bias_{\state}| = |\sum_{\stateII \in \states} \stdist_{\stateII}(\bias_{\state} - \bias_{\stateII})| \le \max_{\stateII \in \states}|\bias_{\state} - \bias_{\stateII}| \le \| \bias \|_{H} \le 4\linf{\payoff}n\min\{\denprod, \comden^{n-1}\} \, . \qedhere
\]
\end{proof}

\begin{proof}[Proof of \cref{thm:bias}]
The proof of both cases follows from \cref{le:bias_irred} using the same argument, so we focus only on $i)$. Let $\recstates \coloneqq \cup_{\ell = 1}^{p} \reclass_\ell$ denote the set of recurrent states of $\mchain$ and $\sea \coloneqq \states \setminus \recstates$ denote the set of transient states. For every $\ell$, let $\transition^{(\ell)} \in \R^{\reclass_\ell \times \reclass_\ell}$ denote the submatrix of $\transition$ formed by the rows and columns with indices in $\reclass_\ell$, and let $\bias^{(\ell)},\payoff^{(\ell)} \in \R^{\reclass_\ell}$ denote the restrictions of $\bias,\payoff$ to the indices from $\reclass_\ell$. The equations \cref{eq:value,eq:bias} imply that $\bias^{(\ell)}$ is a bias of $(\payoff^{(\ell)}, \transition^{(\ell)})$. Therefore, \cref{le:bias_irred} implies that for all $\ell \in [p]$ we have $\linf{\bias^{(\ell)}} \le 2\linf{\payoff}\card{\recstates}\min\{\denprod, \comden^{n-1}\}$. Let $\overbar{\transition} \in \R^{\sea \times \recstates}$ denote the submatrix of $\transition$ formed by the rows from $\sea$ and columns from $\recstates$ and $\hat{\transition} \in \R^{\sea \times \sea}$ denote the submatrix of $\transition$ formed by the rows and columns from $\sea$. Define a vector $\overbar{\bias} \in \R^{\recstates}$ as $\forall \state \in \recstates, \overbar{\bias}_\state \coloneqq \bias^{(\ell)}_\state$, where $\ell$ is such that $\state \in \reclass_\ell$. Furthermore, let $\hat{\gameval}, \hat{\bias},\hat{\payoff} \in \R^{\sea}$ denote the vectors $\gameval, \bias, \payoff$ restricted to the coordinates from $\sea$. By the definition of the bias vector we have $\hat{\transition}\hat{\bias} + \overbar{\transition}\overbar{\bias} = \hat{\gameval} + \hat{\bias} - \hat{\payoff}$. Since $\sea$ is an open set, \cref{le:inverse_transient} gives $\hat{\bias} = (\Id - \hat{\transition})^{-1}(-\hat{\gameval} + \hat{\payoff} + \overbar{\transition}\overbar{\bias})$. Moreover, \cref{eq:value} implies that $\linf{\gameval} \le \linf{\payoff}$. Hence, by \cref{le:estimate_visits} we get
\[
\linf{(\Id - \hat{\transition})^{-1}(-\hat{\gameval} + \hat{\payoff})} \le 2\linf{\payoff}\card{\sea}\min\{\denprod, \comden^{n-1}\} \, .
\]
Furthermore, let $R \coloneqq (\Id - \hat{\transition})^{-1}\overbar{\transition} \in \R^{\sea \times \recstates}$. By \cite[Theorem~3.5.4]{kemeny_snell}, for every $\stateII \in \sea, \state \in \recstates$, $R_{\stateII \state}$ is the probability that the Markov chain starting at $\stateII$ goes to $\state$ when it leaves $\sea$. Hence $R_{\stateII \state} \ge 0$ and $\sum_{\state \in \recstates}R_{\stateII \state} = 1$ for all $\stateII \in \sea$. Therefore, we get
\[
\linf{(\Id - \hat{\transition})^{-1}\overbar{\transition}\overbar{\bias}} \le \linf{\overbar{\bias}} \le 2\linf{\payoff}\card{\recstates}\min\{\denprod, \comden^{n-1}\} \,
\]
and so $\linf{\hat{\bias}} \le 2\linf{\payoff}n\min\{\denprod, \comden^{n-1}\}$.
\end{proof}

\bibliographystyle{acm}

\end{document}